\documentclass[12pt, reqno, microtype]{amsart}

\usepackage{amsmath, amssymb, amsthm}
\usepackage[hidelinks]{hyperref}
\usepackage[parfill]{parskip}
\usepackage{pinlabel}
\usepackage[all]{xy}
\usepackage[margin=1.18in]{geometry}
\usepackage{tikz}
\usetikzlibrary{matrix}

\usepackage{subfig}
\captionsetup[subfigure]{labelfont=rm}

\usepackage{mathrsfs}
\usepackage{mathtools}
\usepackage{mathabx}

%
\title{Cohomology of rotational tiling spaces}

%
\author{James J.\ Walton} 
\address{Department of Mathematics\\University of York\\\newline
         Heslington, York, YO10 5DD\\UK}
\email{jamie.walton@york.ac.uk}
\urladdr{http://maths.york.ac.uk/www/jjw548}

%

\thanks{Research supported by EPSRC}

%

\theoremstyle{definition}

\numberwithin{equation}{section}


\newcommand{\mf}{\mathfrak}
\newcommand{\mc}{\mathcal}


\newcommand{\R}{\mathbb{R}}

\newcommand{\Z}{\mathbb{Z}}
\newcommand{\N}{\mathbb{N}}

\DeclareMathOperator{\im}{\text{im}}

\newtheorem{theorem}{Theorem}[section]
\newtheorem*{theorem*}{Theorem}

\newtheorem*{lemma*}{Lemma}
\newtheorem{corollary}[theorem]{Corollary}
\newtheorem*{corollary*}{Corollary}

\theoremstyle{definition}

\newtheorem{definition}[theorem]{Definition}
\newtheorem*{definition*}{Definition}

\numberwithin{equation}{section} 
\numberwithin{figure}{section}   

\graphicspath{ {images/} }



\begin{document}
\begin{abstract} A spectral sequence is defined which converges to the \v{C}ech cohomology of the Euclidean hull of a tiling of the plane with Euclidean finite local complexity. The terms of the second page are determined by the so-called ePE homology and ePE cohomology groups of the tiling, and the only potentially non-trivial boundary map has a simple combinatorial description in terms of its local patches. Using this spectral sequence, we compute the \v{C}ech cohomology of the Euclidean hull of the Penrose tilings.
\end{abstract}
\maketitle

\section*{Introduction}

To a tiling of Euclidean space one may associate a \emph{tiling space}, a moduli space of locally indistinguishable tilings equipped with a certain natural topology. In the field of aperiodic order, one is typically less interested in short-range features, such as the precise shapes of individual tiles of a given tiling, than long-scale features, such as the nature of recurrence of patches across the tiling. Passing from the original tiling to a tiling space (or associated dynamical system) is an elegant way of investigating an aperiodic tiling, since inessential features are forgotten whilst much useful qualitative information is retained. Recent work of Julien and Sadun makes this precise; to paraphrase, two translational hulls of FLC tilings being homeomorphic is equivalent to those tilings being MLD equivalent (local redecorations of each other) up to a possible `shape change' \cite{JulSad15}. For a general introduction to the study of aperiodic tilings through the topology of tiling spaces, see Sadun's book \cite{SadunBook} on the topic.

The \emph{translational hull} of a tiling $\mf{T}$ is the topological space $\overline{\mf{T}+\R^d}$, the completion of the translational orbit of $\mf{T}$ with respect to an intuitively defined metric on tilings. Whilst most attention in the literature has been given to the this choice of tiling space, there is another space naturally associated to $\mf{T}$, the \emph{Euclidean hull} $\Omega^\text{rot}$, which retains more information on the rotational symmetries of the tiling. It is given by taking the completion of the collection of tilings which are rigid motions of $\mf{T}$ (rather than just translations). For example, for $\mf{T}$ any Penrose kite and dart tiling $\Omega^{\text{rot}}$ is the moduli space of tilings of the plane which may be constructed from rigid motions of the Penrose kite and dart tiles, fitting together according to their matching rules.

A common topological invariant employed to study tiling spaces is \v{C}ech cohomology. For a two-dimensional substitution tiling, Barge, Diamond, Hunton and Sadun gave a spectral sequence converging to the \v{C}ech cohomology $\check{H}^\bullet(\Omega^{\text{rot}})$ of the Euclidean hull \cite{BDHS10}. The entries of the second page of this spectral sequence are determined by the number of tilings in the hull with non-trivial rotational symmetry (assumed all to be of the same order) and the \v{C}ech cohomology of the quotient space $\Omega^0 \coloneqq \Omega^{\text{rot}} / \text{SO}(2)$, given by identifying tilings of the Euclidean hull which agree up to a rotation at the origin. Unfortunately, there are often difficult extension problems left to resolve at the $E^\infty$ page of this spectral sequence.

In this paper we develop an alternative spectral sequence for computing the \v{C}ech cohomology of the Euclidean hull. It is applicable to any tiling with `Euclidean finite local complexity' (defined in Subsection \ref{subsec: Polytopal Tilings}), at least after passing to an equivalent polytopal tiling. The terms of this spectral sequence, given in Theorem \ref{thm: spectral sequence}, are determined by the `Euclidean pattern-equivariant' (ePE) cohomology and ePE homology groups of the tiling. The ePE homology, defined in \cite{Wal16}, is based on a similar construction to the well-known pattern-equivariant cohomology initially developed by Kellendonk and Putnam \cite{Kel03,KelPut06} and later reformulated in the cellular setting by Sadun \cite{Sad07}. Much like the spectral sequence of \cite{BDHS10}, the $E^2$ page is mostly determined by the \v{C}ech cohomology of $\Omega^0$ (which is isomorphic to the ePE cohomology), but the existence of rotationally invariant tilings of the hull often induces extra torsion in a single entry, here via the ePE homology term.

In Theorem \ref{thm: boundary} we show that there is a simple combinatorial description of the only potentially non-trivial boundary map of this spectral sequence, determined by the rigid-equivalence classes of `star-patches' about the vertices of the tiling. This makes the $E^\infty$ page computable for certain aperiodic tilings of interest. More seems to be resolved by the $E^\infty$ page than in the spectral sequence considered in \cite{BDHS10}. In particular, applied to the Penrose tilings all torsion is killed by the $E^\infty$ page, giving the following cohomology groups of its Euclidean hull:
\begin{align*}
\check{H}^k(\Omega^{\text{rot}}) = \begin{cases}
\Z   & \text{for } k=0;\\
\Z^2 & \text{for } k=1;\\
\Z^3 & \text{for } k=2;\\
\Z^2 & \text{for } k=3;\\
0    & \text{otherwise.}
\end{cases}
\end{align*}
This corrects the calculation of these cohomology groups published in \cite{BDHS10}. The result is surprising, since it seems to have been previously believed that in general the existence of pairs of rotationally invariant tilings in the hull inevitably leads to non-trivial torsion in the cohomology. One may expect for the exceptional fibres in the BDHS approximants of $\Omega^{\text{rot}}$ to produce non-trivial torsion elements in the degree one homology of the approximants, and so also torsion in the degree two cohomology. However, we show directly that for the Penrose kite and dart tilings the loop associated to the two rotationally invariant tilings of the hull is null-homotopic, explaining the lack of torsion in $\check{H}^2(\Omega^{\text{rot}})$.

The paper is organised as follows. In Section \ref{sec: Preliminaries} we recall the construction of the spaces $\Omega^{\text{rot}}$ and $\Omega^0$. We explain how their \v{C}ech cohomologies may be expressed in terms of pattern-equivariant cohomology, and recall the ePE homology defined in \cite{Wal16}. In Section \ref{sec: The Spectral Sequence and Boundary Map} we prove Theorem \ref{thm: spectral sequence}, on the existence of a spectral sequence whose $E^2$ page is given in terms of the ePE homology and ePE cohomology which converges to the \v{C}ech cohomology of $\Omega^{\text{rot}}$. We then define a special class of the ePE homology of a tiling which, according to Theorem \ref{thm: boundary}, determines the boundary map of the $E^2$ page of this spectral sequence. In Section \ref{sec: Cohomology of the Penrose Tilings} we apply our spectral sequence to the Penrose tilings. In the final section we present an alternative approach to these computations for tilings with translational finite local complexity. We apply this method to the Penrose tilings to obtain a completely different calculation of its cohomology groups which agrees with the approach via the ePE spectral sequence.

\subsection*{Acknowledgements}

The author thanks John Hunton and Dan Rust for helpful discussions related to this work.

\section{Preliminaries} \label{sec: Preliminaries}
\subsection{Polytopal Tilings} \label{subsec: Polytopal Tilings}

A \emph{polytopal tiling} of $\R^d$ is a pair $\mf{T} = (\mc{T},l)$, where $\mc{T}$ is a regular CW decomposition of $\R^d$ of polytopal cells and $l$ is a \emph{labelling}, a map from the set of cells to a set of `labels'. Frequently one has no need for a labelling, but equipping one can be helpful in case one wishes to allow for decorated patches so that two of which may be distinct despite being geometrically equivalent. Little generality is lost in considering only polytopal tilings, since any given tiling is always `S-MLD' equivalent to a polytopal tiling via a Voronoi construction. This equivalence relation was introduced in \cite{BSJ91}, and is the obvious extension to general rigid motions of the more standard `mutually locally derivable' (MLD) equivalence relation (of which see \cite{SadunBook}) in the translational setting. In particular, the topologies of the spaces $\Omega^0$ and $\Omega^\text{rot}$ (to be defined below) do not depend on the particular chosen representative of an S-MLD equivalence class.

A \emph{patch} of $\mf{T}$ is a finite subcomplex $\mc{P}$ of $\mc{T}$, equipped with the labelling $l$ restricted to $\mc{P}$. A patch consisting of only a single $d$-cell and its boundary cells shall be called a \emph{tile}. Two patches are \emph{rigid equivalent} if there is a \emph{rigid motion} (an orientation preserving isometry of $\R^d$) bijectively mapping the cells of one patch to the other in a way which respects the labelling. The \emph{diameter} of a patch is the diameter of the support of its cells. We say that $\mf{T}$ has \emph{Euclidean finite local complexity} (\emph{eFLC}) if, for any $r > 0$, there are only finitely many patches of diameter at most $r$ up to rigid equivalence. Since our tilings are always assumed to be polytopal, this is equivalent to asking for there to be only finitely many shapes of cells of $\mc{T}$ up to rigid motion, and that the labelling function takes on only finitely many distinct values.

Given a homeomorphism $\phi$ of $\R^d$, we define the tiling $\phi(\mf{T})$ in the obvious way, by applying $\phi$ to each cell of $\mf{T}$ and preserving labels (and similarly we may apply $\phi$ to finite patches). For $r \in \R_{>0}$ and cell $c$ of $\mf{T}$, we let the \emph{$r$-patch at $c$} be the patch of $\mf{T}$ supported on the set of tiles which are within radius $r$ of $c$. Denote by $E = \text{S}E(d) = \R^d \rtimes \text{SO}(d)$ the group of rigid motions of $\R^d$. For $k$-cells $c_1,c_2$ of $\mf{T}$, let $E_\mf{T}(c_1,c_2;r)$ be the set of rigid motions $\phi \in E$ which send $c_1$ to $c_2$ and the $r$-patch at $c_1$ to the $r$-patch at $c_2$. 

We shall always assume that $\mf{T}$ has \emph{trivial cell isotropy}: there exists some $r>0$ for which each $\phi \in E_\mf{T}(c,c;r)$ is the identity upon restriction to $c$. Any polytopal tiling can be made to have trivial cell isotropy by taking a barycentric subdivision \cite{Wal16}. For a cell $c$ of the tiling, let $\text{St}(c)$ be the \emph{star of $c$}, the patch of tiles $t$ incident with $c$, that is, with $\text{supp}(t) \supseteq c$. For example, the star $\text{St}(c)$ of a top-dimensional cell $c$ is simply a tile; for a tiling of $\R^2$, the star $\text{St}(e)$ of a $1$-cell $e$ is a two tile patch.

Up to an S-MLD equivalence given by a local redecoration of the tiles, we may strengthen the trivial cell isotropy condition with the following: for every $\phi \in E$ taking $c$ and $\text{St}(c)$ to themselves, $\phi$ is the identity map on $c$. It shall be convenient, and cause no loss of generality, to always assume this condition for eFLC tilings with trivial cell isotropy, to which our main theorems apply. In this case, we may \emph{consistently} orient each cell of $\mf{T}$, which is to say that whenever $\phi \in E$ takes $c_1$ to $c_2$ and $\text{St}(c_1)$ to $\text{St}(c_2)$, then $\phi$ maps our chosen orientation of $c_1$ to that of $c_2$; henceforth, we shall always assume that our cells are provided with orientations satisfying this condition. Assuming eFLC, there are finitely many distinct rigid equivalence classes of tiles; choose for each a representative \emph{prototile}. For a tiling of $\R^2$, assuming our strengthened trivial cell isotropy condition, for each rigid motion $t$ of prototile $p$ there exists a \emph{unique} rotation, denoted $\tau_t \in S^1$, for which $\tau_t (p)$ and $t$ agree up to translation. Let $|\tau_t| \in [0,2\pi)$ be the magnitude of rotation determining $\tau_t$, measured anticlockwise.

\subsection{Rotational Tiling Spaces}

A collection of tilings may be topologised by declaring that two tilings $\mf{T}_1$ and $\mf{T}_2$ are `close' whenever $\mf{T}_1$ and $\phi(\mf{T}_2)$ agree to a `large' radius about the origin for some `small' perturbation $\phi$, a homeomorphism of $\R^d$ which moves points within a large radius of the origin only a small amount. This is usually achieved via a metric; see \cite{SadunBook} or \cite{BDHS10}. There are inevitably some arbitrary and ultimately inconsequential choices to be made in the precise choice of the metric; an alternative (see \cite[\S1.2]{Wal16}) is to define a uniformity, which is essentially unique and has a very direct description from the notion of `closeness' described above.

\begin{definition} The \emph{Euclidean hull} of a tiling $\mf{T}$ is the topological space
\[
\Omega^\text{rot} = \Omega_\mf{T}^\text{rot} \coloneqq \overline{\{\phi(\mf{T}) \mid \phi \in \text{S}E(d)\}},
\]
the completion of the Euclidean orbit of $\mf{T}$ with respect to the metric discussed above.
\end{definition}

When $\mf{T}$ has eFLC, two tilings are `close' precisely when they agree about the origin to a `large' radius up to a `small' \emph{rigid motion}. Consequently it is not hard to show that $\Omega^\text{rot}$ is a compact space whose points may be identified with the set of tilings whose patches are all rigid motions of the patches of $\mf{T}$. The Euclidean group $E$ naturally acts on $\Omega^\text{rot}$ via $\phi \cdot \mf{T}' \mapsto \phi(\mf{T}')$, where $\phi \in E$ and $\mf{T}' \in \Omega^\text{rot}$. In particular, the group of rotations at the origin $\text{SO}(d) \leq E$ acts on the Euclidean hull. Following the notation of \cite{BDHS10}, we define $\Omega^0 \coloneqq \Omega^\text{rot} / \text{SO}(d)$, the quotient of the Euclidean hull which identifies tilings that are rotates of each other at the origin.

\subsection{Cohomology of Tiling Spaces} Our central goal is to calculate the \v{C}ech cohomology $\check{H}^\bullet(\Omega^{\text{rot}})$ of the Euclidean hull of an eFLC two-dimensional tiling. For certain tilings, in particular for substitution tilings, there are relatively simple methods of computing the cohomology $\check{H}^\bullet(\Omega^0)$ of the quotient \cite{BDHS10}. There is a close relationship between these cohomologies. For a two-dimensional tiling, the fibres $\pi^{-1}(x)$ of the quotient map $\pi \colon \Omega^{\text{rot}} \rightarrow \Omega^0$ may be identified with $\text{SO}(2) =: \text{S}^1$ at \emph{generic fibres}, corresponding to tilings with trivial rotational symmetry, and to $\text{SO}(2)/C_n =: \text{S}^1/n \cong \text{S}^1$ at \emph{exceptional fibres}, corresponding to tilings with $n$-fold rotational symmetry at the origin, where $C_n$ is the cyclic subgroup of rotations by $2 \pi k/n$ at the origin. Without these exceptional fibres, the map $\pi$ would be a fibration. In the presence of tilings with rotational symmetry, $\pi$ is no longer a fibration, although it is in some sense very close to one; in particular, one may show that over real coefficients $\check{H}^n(\Omega^{\text{rot}};\R) \cong \check{H}^n(\Omega^0 \times S^1;\R)$ (see \cite[Theorem 8]{BDHS10}). However, over integral coefficients the failure of $\pi$ to be a fibration at exceptional points can create extra torsion in $\check{H}^n(\Omega^{\text{rot}})$.

\subsection{Pattern-equivariant (co)homology for $\Omega^0$} Our approach shall rely on a highly geometric \emph{pattern-equivariant} point of view of these cohomology groups. Given a tiling $\mf{T} = (\mc{T},l)$, denote by $C^\bullet(\mc{T})$ the cellular cochain complex of $\mc{T}$. We shall say that a $k$-cochain $\psi \in C^k(\mc{T})$ is \emph{Euclidean pattern-equivariant} (\emph{ePE}) if there exists some $r >0$ for which, whenever $E_\mf{T}(c_1,c_2;r) \neq \emptyset$ for $k$-cells $c_1$ and $c_2$, then $\psi(c_1) = \psi(c_2)$ (recall that our cells are consistently oriented). In other words, to say that a cochain $\psi$ is ePE is simply to say that $\psi$ is constant on $k$-cells whose neighbourhood patches agree to a sufficiently large radius up to rigid motion. It is easy to show that the cellular coboundary of an ePE cochain is ePE, so we may define the sub-cochain complex $C^\bullet(\mf{T}^0)$ of $C^\bullet(\mc{T})$ consisting of ePE cochains. Its cohomology $H^\bullet(\mf{T}^0)$ is called the \emph{ePE cohomology of $\mf{T}$}.

Using a certain inverse limit presentation for $\Omega^0$ (see \cite{SadunBook}) and two fundamental properties of \v{C}ech cohomology, one may show the following:

\begin{theorem} Let $\mf{T}$ be an eFLC tiling with trivial cell isotropy. Then there exists a canonical isomorphism $\check{H}^\bullet(\Omega^0) \cong H^\bullet(\mf{T}^0)$. \end{theorem}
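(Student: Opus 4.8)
The plan is to realise $\Omega^0$ as an inverse limit of finite CW complexes, the \emph{Euclidean approximants}, and then to play the \v{C}ech cohomology of this limit off against the ePE cochain complex. Concretely, for each radius $r > 0$ I would form a complex $\Gamma_r$ whose $k$-cells are the rigid equivalence classes of the $r$-patches at the $k$-cells of $\mf{T}$ (equivalently, the classes of cells $c$ under $E_\mf{T}(c_1,c_2;r) \neq \emptyset$), with attaching maps inherited from those of $\mc{T}$ after passing to these classes. Finiteness of $\Gamma_r$ is exactly the eFLC hypothesis, since there are then only finitely many $r$-patches up to rigid motion; the (strengthened) trivial cell isotropy condition, available without loss of generality via S-MLD equivalence, is what guarantees that each such class carries a well-defined underlying oriented cell, so that $\Gamma_r$ is a genuine regular CW complex and the map $f_r \co \mc{T} \to \Gamma_r$ sending $c$ to the class of its $r$-patch is cellular and orientation-respecting. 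For $r' > r$ there are obvious cellular bonding maps $\Gamma_{r'} \to \Gamma_r$ forgetting the outer annulus of each collar, and the standard inverse-limit description of rotational tiling spaces (see \cite{SadunBook}) supplies a homeomorphism $\Omega^0 \cong \varprojlim \Gamma_r$.

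With this presentation in hand, the computation is driven by the two advertised properties of \v{C}ech cohomology. First, \v{C}ech cohomology is continuous: on an inverse limit of compact Hausdorff spaces it returns the direct limit of the cohomologies, giving $\check{H}^\bullet(\Omega^0) \cong \varinjlim_r \check{H}^\bullet(\Gamma_r)$. Second, on a finite CW complex \v{C}ech cohomology agrees with cellular cohomology, so $\check{H}^\bullet(\Gamma_r) \cong H^\bullet(\Gamma_r)$. Combining these yields $\check{H}^\bullet(\Omega^0) \cong \varinjlim_r H^\bullet(\Gamma_r)$, where the structure maps are those induced by the bonding maps.

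It then remains to identify this direct limit with the ePE cohomology, and here I would argue at the cochain level. Pulling back along $f_r$ gives an injection $f_r^* \co C^\bullet(\Gamma_r) \hookrightarrow C^\bullet(\mc{T})$ whose image is precisely the cochains that are ePE with radius $r$: a cochain lies in the image exactly when it is constant on cells sharing the same rigid $r$-class, which is the defining condition for ePE-ness at radius $r$ (the consistent orientations ensuring that no sign ambiguities intervene). These inclusions are compatible with the bonding maps, and since every ePE cochain is ePE at some finite radius, the induced map $\varinjlim_r C^\bullet(\Gamma_r) \to C^\bullet(\mf{T}^0)$ is an isomorphism of cochain complexes. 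As cohomology commutes with filtered colimits, passing to cohomology gives $\varinjlim_r H^\bullet(\Gamma_r) \cong H^\bullet(\mf{T}^0)$, and stringing together the isomorphisms above produces the desired $\check{H}^\bullet(\Omega^0) \cong H^\bullet(\mf{T}^0)$. Every identification is natural in $\mf{T}$, so the isomorphism is canonical.

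The step I expect to require the most care is the construction of the approximants and the verification that $\Omega^0 \cong \varprojlim \Gamma_r$ as topological spaces. In particular, one must check that the rigid-equivalence quotient genuinely yields a CW complex, rather than some non-Hausdorff or non-regular identification space, and that $f_r$ is honestly cellular; this is exactly where the strengthened trivial cell isotropy hypothesis does the work, letting one orient cells consistently across a rigid class. Once the presentation is in place, the two \v{C}ech cohomology properties and the cochain-level colimit argument are essentially formal.
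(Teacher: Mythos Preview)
Your proposal is correct and is precisely the argument the paper has in mind: an inverse limit presentation of $\Omega^0$ by finite CW approximants $\Gamma_r$ (citing \cite{SadunBook}), followed by continuity of \v{C}ech cohomology and its agreement with cellular cohomology on finite complexes, with the identification $\varinjlim_r C^\bullet(\Gamma_r) \cong C^\bullet(\mf{T}^0)$ at the cochain level. The paper does not spell this out for $\Omega^0$ but gives exactly this chain of isomorphisms in the proof of the analogous statement for $\Omega^{\text{rot}}$ immediately afterwards.
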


We define the \emph{ePE chain complex} $C_\bullet(\mf{T}^0)$ by replacing the cellular coboundary map by the cellular boundary map in $C^\bullet(\mf{T}^0)$; so we think of ePE chains as particular kinds of `infinite' cellular chains. The homology $H_\bullet(\mf{T}^0)$ is called the \emph{ePE homology of $\mf{T}$}. It was shown in \cite{Wal16} that over divisible coefficients the ePE homology groups are Poincar\'{e} dual to the ePE cohomology groups, which follows from a classical cell, dual-cell argument and a proof that the ePE homology is invariant under barycentric subdivision for divisible coefficients. However, these groups are not necessarily Poincar\'{e} dual over integral coefficients. For a two-dimensional tiling, one may modify the ePE chain complex $C_\bullet(\mf{T}^0)$ in degree zero so as to restore duality, as follows. Define the subgroup $C_0^\dagger(\mf{T}^0)$ of $C_0(\mf{T}^0)$ as the group of ePE chains $\sigma$ for which there exists some $r>0$ such that, whenever the patch of cells within radius $r$ of a vertex $v$ has $n$-fold symmetry about $v$, then $\sigma(v) = n \cdot k$ for some $k \in \Z$. In other degrees we set $C_i^\dagger(\mf{T}^0) \coloneqq C_i(\mf{T}^0)$. With the standard cellular boundary maps, this defines a sub-chain complex $C_\bullet^\dagger(\mf{T}^0)$ of the ePE chain complex.

\begin{theorem}[\cite{Wal16}] \label{thm: ePE} Let $\mf{T}$ be an eFLC tiling of $\R^d$ with trivial cell isotropy. Then
\begin{enumerate}
	\item if $G$ is a divisible coefficient ring with identity, $H^\bullet(\mf{T}^0;G) \cong H_{d-\bullet}(\mf{T}^0;G)$;
	\item if $d=2$, we have that $H^k(\mf{T}^0) \cong H_{2-k}(\mf{T}^0)$ for $k \neq 2$ and $H_0(\mf{T}^0)$ is an extension of $H^2(\mf{T}^0)$ by a torsion group whose generators have orders determined by the rotationally invariant tilings of $\Omega^0$;
	\item if $d=2$, we have that $H^\bullet(\mf{T}^0) \cong H_{2-\bullet}^\dagger(\mf{T}^0)$.
\end{enumerate}
\end{theorem}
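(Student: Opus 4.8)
The plan is to deduce everything from the classical Poincar\'{e} duality between the cellular cochain complex of a CW decomposition of $\R^d$ and the cellular chain complex of its dual, taking care that every construction respects the pattern-equivariance condition, and then to isolate the integral discrepancy occurring at rotationally symmetric patches.

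First I would set up the combinatorial duality. Let $\mc{T}^*$ be the dual cell complex of $\mc{T}$, so that each $k$-cell $c$ has a dual $(d-k)$-cell $c^*$ with incidences reversed. Fixing an orientation of $\R^d$ and the usual Poincar\'{e} duality signs, the assignment $\psi \mapsto \psi^*$ with $\psi^*(c^*) \coloneqq \psi(c)$ gives an isomorphism $C^k(\mc{T}) \cong C_{d-k}(\mc{T}^*)$ intertwining the cellular coboundary with the cellular boundary. The essential observation is that passing to dual cells is a purely local operation: the patch of $\mc{T}^*$ about $c^*$ out to any radius is determined by, and has the same rigid symmetries as, the patch of $\mc{T}$ about $c$ out to a comparable radius. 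Hence the duality carries ePE cochains to ePE chains and back, giving $H^\bullet(\mf{T}^0) \cong H_{d-\bullet}(\mf{T}^{*,0})$ for the dual tiling $\mf{T}^*$.

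Next I would relate the dual tiling back to $\mf{T}$ itself. Since $\mf{T}$ and $\mf{T}^*$ share a common refinement, namely the barycentric subdivision $\mf{T}'$, it is enough to understand the subdivision chain map $s \co C_\bullet(\mf{T}^0) \to C_\bullet((\mf{T}')^0)$. This map is manifestly ePE, and one builds a candidate homotopy inverse by amalgamating the subdivided pieces of each original cell. Over a divisible coefficient ring the averaging needed to make such an inverse pattern-equivariant near symmetric cells is permitted, so $s$ is a homology isomorphism; combined with the previous step this establishes (1).

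The main obstacle, and the content of (2) and (3), is controlling what goes wrong over $\Z$, where $s$ need not induce an isomorphism. I expect the defect to sit entirely in degree $0$ and to be localized at those vertices $v$ whose large neighbourhood patch has $n$-fold rotational symmetry, the data of which is equivalent to that of the rotationally invariant tilings of $\Omega^0$. Near such a $v$ the $n$ symmetrically placed refined cells are forced to carry equal values by the ePE condition, so constructing an integral chain-level inverse to $s$ would require dividing a class concentrated at $v$ by $n$, and the obstruction to doing so is a summand $\Z/n$. I would make this precise by a local, cell-by-cell computation of the cokernel of the comparison map, exhibiting exactly one $\Z/n$ per orbit of $n$-symmetric vertices; transporting this through the duality of the first step yields the asserted extension of $H^2(\mf{T}^0)$ by this torsion group, proving (2). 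Finally, for (3) I would check that the divisibility constraint defining $C_0^\dagger(\mf{T}^0)$ --- that $\sigma(v) \in n\Z$ whenever the $r$-patch at $v$ is $n$-fold symmetric --- is precisely the condition carving out the image of the integral duality map, so that passing to the subcomplex $C_\bullet^\dagger(\mf{T}^0)$ removes the obstruction and restores $H^\bullet(\mf{T}^0) \cong H_{2-\bullet}^\dagger(\mf{T}^0)$ integrally.
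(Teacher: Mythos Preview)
The paper does not prove this theorem; it is quoted from \cite{Wal16}, and the surrounding text only sketches the argument: a classical cell/dual-cell Poincar\'{e} duality, a proof that ePE homology is invariant under barycentric subdivision over divisible coefficients, and then a degree-zero modification to restore duality over $\Z$. Your proposal follows exactly this outline --- duality via the dual complex, passage through the barycentric subdivision as common refinement, identification of the integral obstruction as $\Z/n$ summands at $n$-fold symmetric vertices, and recognition of $C_0^\dagger$ as precisely the subgroup on which the integral duality map lands --- so it is essentially the same approach as the one the paper attributes to \cite{Wal16}.
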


\subsection{Pattern-equivariant (co)homology for $\Omega^\text{rot}$} The polytopal decomposition $\mc{T}$ of $\R^d$ defines a decomposition of $E$, by pulling back cells of $\mc{T}$ to $E$ via the fibration $q \colon E \rightarrow \R^d$ which sends $\phi \in E$ to $\phi(0) \in \R^d$. Denote by $\mc{T}^k$ the $k$-skeleton of $\mc{T}$ and $\mc{X}^k \coloneqq q^{-1}(\mc{T}^k)$. The decomposition $\mc{X}$ of $E$ is not cellular; of course, the preimage of a $k$-cell $c$ is homeomorphic to $\text{SO}(d) \times c$. To define the analogue of PE cohomology for $\Omega^\text{rot}$ we need to introduce additional cells so as to break up $\mc{X}$ into a cellular decomposition. In dimension $d=2$, a simple way of doing this is described in \cite{SadunBook}. For completeness we shall describe here a similar method.

For each tile $t$ of $\mf{T}$, we define two $3$-cells of $E$. Let $c_+^3(t)$ to be the set of $\phi \in E$ for which $\phi^{-1}(t)$ contains the origin in its interior and $|\tau_{\phi^{-1}(t)}| \in (0,\pi)$; that is, the tile corresponding to $t$ in $\phi^{-1}(\mf{T})$ lies over the origin, and is an anticlockwise rotate of its representative prototile. The cell $c_-^3(t)$ is defined analogously. The cells of $\mc{T}$ naturally define lower dimensional cells of $E$ in a similar manner which, together with the cells $c_\pm^3(t)$, define a CW decomposition $\mc{E}$ of $E$ which refines the non-cellular decomposition $\mc{X}$. For example, each $0$-cell of $\mc{E}$ is given by an element $\phi \in E$ for which $\phi^{-1}(\mf{T})$ has a vertex at the origin, incident with a tile $t$ for which $|\tau_t| \in \{0,\pi\}$. This breaks the fibre $q^{-1}(v) \cong S^1$ of a vertex $v$ of $\mf{T}$ into a union of $2n$ vertices and $2n$ open intervals, where $n$ is number of tiles incident with $v$ (at least typically---of course it may happen, and does not cause problems, for two tiles $t_1$ and $t_2$ incident with $v$ to satisfy $\tau_{t_1} = \pm \tau_{t_2}$). The cylinder $q^{-1}(e)$ of an open $1$-cell $e$ is (typically) decomposed into four $1$-cells and four $2$-cells, each region corresponding to when the two tiles incident with $e$ are oriented identically, oppositely, clockwise or anti-clockwise relative to their representative prototiles. Each solid torus $q^{-1}(t) \cong t \times S^1$ for a tile $t$ has its interior decomposed by the two $3$-cells $c_\pm^3(t)$ and the two $2$-cells corresponding to when $t$ is oriented identically, and oppositely to its representative prototile.

For $\phi \in E$ and $A \subseteq E$, let $\phi(A) \coloneqq \{\phi \circ a \mid a \in A\}$. Suppose that $c_1$, $c_2$ are cells of $\mf{T}$ for which $\phi(c_1)=c_2$. Then $\phi(q^{-1}(c_1)) = q^{-1}(c_2)$, so $\phi$ locally respects the decomposition $\mc{X}$. Moreover, for tiles $t_1$, $t_2$ with $\phi(t_1) = t_2$ we have that $\phi \circ \tau_{t_1}$ and $\tau_{t_2}$ agree up to a translation. So for $\phi \in E_\mf{T}(c_1,c_2;r)$, with $r$ sufficiently large, it is easy to see that $\phi(\mc{C}_1) = \mc{C}_2$ for subcomplexes $\mc{C}_i$ of $\mc{E}$ containing neighbourhoods of $q^{-1}(c_i)$; explicitly, we may take $\mc{C}_i$ to be the cells of $q^{-1}(\text{supp}(P))$, where $P$ is the patch of tiles contained in the $r$-patch at $c_i$ and not intersecting tiles of its complement. In other words, the elements of $E_\mf{T}(-,-;-)$ locally respect the CW decomposition $\mc{E}$.

A $k$-cochain $\psi \in C^k(\mc{E})$ is \emph{pattern-equivariant} (\emph{PE}) if there exists some $r \in \R_{>0}$ for which, for all $\phi \in E_\mf{T}(c_1,c_2;r)$ and every $k$-cell $c \in \mc{E}^k$ with $c \subset q^{-1}(c_1)$, we have that $\psi$ agrees on $c$ and $\phi(c)$. It is not hard to see that the coboundary of such a cochain is still PE, so we may define the sub-cochain complex $C^\bullet(\mf{T}^\text{rot})$ of PE cochains of $C^\bullet(\mc{E})$. Similarly, the \emph{boundary} of a PE cochain---thought of as an infinite chain---is still PE, so we may define the chain complex $C_\bullet(\mf{T}^\text{rot})$ by replacing the cellular coboundary maps of $C^\bullet(\mf{T}^\text{rot})$ with boundary maps.

\begin{theorem} Let $\mf{T}$ be a tiling of $\R^d$ with eFLC. There exist isomorphisms
\[
\check{H}^\bullet(\Omega^\text{rot}_\mf{T}) \cong H^\bullet(\mf{T}^\text{rot}) \cong H_{d-\bullet}(\mf{T}^\text{rot}).
\]
\end{theorem}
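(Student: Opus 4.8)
The plan is to prove the two isomorphisms in turn, in each case lifting the corresponding statement for $\Omega^0$ recalled above from the cellular model $\mc{T}$ of $\R^d$ to the cellular model $\mc{E}$ of $E = \text{S}E(d)$. The first isomorphism is obtained from an inverse limit presentation of $\Omega^\text{rot}$ together with the same two properties of \v{C}ech cohomology used for $\Omega^0$ (continuity under inverse limits and agreement with cellular cohomology on CW complexes). The second is a Poincar\'{e}--Lefschetz duality for PE (co)chains on the orientable manifold $E$, carried out via a dual cellulation and the subdivision-invariance of PE homology established in \cite{Wal16}.

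For $\check{H}^\bullet(\Omega^\text{rot}) \cong H^\bullet(\mf{T}^\text{rot})$ I would, for each $r > 0$, form the finite CW complex $\Gamma_r$ obtained from $\mc{E}$ by gluing cells whose underlying tiling patches agree to radius $r$ up to rigid motion, i.e.\ along the identifications induced by the sets $E_\mf{T}(-,-;r)$. The observation recorded in the excerpt, that elements of $E_\mf{T}(-,-;r)$ locally respect the decomposition $\mc{E}$, ensures these are genuine cellular quotients, and eFLC ensures each $\Gamma_r$ is finite. The canonical maps $\Omega^\text{rot} \to \Gamma_r$ then assemble into a homeomorphism $\Omega^\text{rot} \cong \varprojlim_r \Gamma_r$ --- the rotational analogue of the Anderson--Putnam/G\"{a}hler presentation (see \cite{SadunBook,BDHS10}) --- and continuity of \v{C}ech cohomology gives $\check{H}^\bullet(\Omega^\text{rot}) \cong \varinjlim_r \check{H}^\bullet(\Gamma_r)$. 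As each $\Gamma_r$ is a finite CW complex its \v{C}ech cohomology is its cellular cohomology, and a cellular $k$-cochain on $\Gamma_r$ pulls back along $\mc{E} \to \Gamma_r$ to precisely an ``$r$-PE'' $k$-cochain; taking the direct limit over $r$ identifies the right-hand side with $H^\bullet(\mf{T}^\text{rot})$, since a cochain is PE exactly when it is $r$-PE for some $r$.

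For $H^\bullet(\mf{T}^\text{rot}) \cong H_{d-\bullet}(\mf{T}^\text{rot})$ I would mimic the duality argument of \cite{Wal16} for $\mf{T}^0$. Passing to the barycentric subdivision $\mc{E}'$ and its dual cellulation $\mc{E}^*$, each $k$-cell $c$ of $\mc{E}$ acquires a dual cell $c^*$ of dimension $\dim E - k$, and $\psi \mapsto \sum_c \psi(c)\,c^*$ defines a degree-reversing isomorphism from the PE cochain complex of $\mc{E}$ onto the PE locally-finite chain complex of $\mc{E}^*$, intertwining $\delta$ with $\pm\partial$. Since the subdivision and its dual are constructed pattern-equivariantly from $\mf{T}$, this map carries PE cochains to PE chains and is bijective, hence an isomorphism of complexes; combined with the invariance of PE homology under barycentric subdivision (the main technical import from \cite{Wal16}) it yields the stated duality. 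Here $C_\bullet(\mf{T}^\text{rot})$ is $\Z$-graded with homological degree equal to cellular dimension less $\dim \text{SO}(d) = \binom{d}{2}$, so that the top-dimensional chains of $\mc{E}$ lie in degree $d$; with this convention the dual of a $k$-cochain lands in homological degree $(\dim E - k) - \binom{d}{2} = d - k$, exactly the indexing asserted.

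I expect the second isomorphism to be the main obstacle, the difficulty being to control pattern-equivariance under the dual-cell construction over the non-compact, fibred manifold $E$ rather than over a closed manifold. One must verify that the dual-cell map sends PE cochains to chains that are PE with a uniformly bounded radius, and that passing to the barycentric subdivision preserves pattern-equivariance; both rely on the standing trivial-cell-isotropy and consistent-orientation hypotheses, which allow the subdivision and dualisation to be performed compatibly across rigidly equivalent patches. A further point requiring care is the sign and orientation bookkeeping in the fibre $\text{SO}(d)$ directions, encoded for $d = 2$ by the paired $3$-cells $c_\pm^3(t)$, so as to ensure the candidate map is a chain isomorphism rather than merely a degreewise bijection.
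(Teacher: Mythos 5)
Your proposal is correct and follows essentially the same route as the paper: the first isomorphism via the inverse-limit presentation of $\Omega^\text{rot}$ by approximants formed by identifying cells of $\mc{E}$ along the maps $E_\mf{T}(-,-;r)$, continuity of \v{C}ech cohomology, and the identification of pulled-back cochains with PE cochains; the second via a cell, dual-cell Poincar\'{e} duality argument resting on barycentric-subdivision invariance, which the paper phrases by viewing $\mc{E}$ together with $E_\mf{T}(-,-;-)$ as a `system of internal symmetries' with trivial isotropy in the sense of \cite[\S 3.5]{Wal16}. Your explicit degree-shift convention (homological degree equal to cellular dimension minus $\binom{d}{2}$) is a sensible way to make the stated indexing $H_{d-\bullet}(\mf{T}^\text{rot})$ literally correct given that $\mc{E}$ is $\bigl(d+\binom{d}{2}\bigr)$-dimensional, a point the paper leaves implicit.
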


\begin{proof} The first isomorphism $\check{H}^\bullet(\Omega^\text{rot}_\mf{T}) \cong H^\bullet(\mf{T}^\text{rot})$ follows (see \cite{SadunBook}) from an analogous argument to the translational case (of which, see \cite{Sad07}). To recall some details, one may use the maps of $E_\mf{T}(c_1,c_2;r)$ to identify cells of $\mc{E}$ for successively larger $r$, constructing a diagram of CW complexes $\Gamma_i$, called \emph{approximants}, and quotient maps $f_i$ between them whose inverse limit is homeomorphic to $\Omega^\text{rot}$. The PE cochains of $\mc{E}$ are precisely the pullbacks of cochains from these approximants. Since the \v{C}ech cohomology of an inverse limit of compact CW complexes is naturally isomorphic to the direct limit of cellular cohomologies, it follows that
\[
\check{H}^\bullet(\Omega^\text{rot}) \cong \check{H}^\bullet(\varprojlim (\Gamma_i,f_i)) \cong \varinjlim (H^\bullet(\Gamma_i),f_i^*) \cong H(\varinjlim (C^\bullet(\Gamma_i),f_i^*)) \cong H^\bullet(\mf{T}^\text{rot}).
\]
For the second isomorphism $H^\bullet(\mf{T}^\text{rot}) \cong H_{d-\bullet}(\mf{T}^\text{rot})$, one may consider the CW decomposition $\mc{E}$ together with the maps $E_\mf{T}(-,-;-)$ as a `system of internal symmetries' with trivial isotropy, defined in \cite[\S3.5]{Wal16}. For such systems one may show that we have the desired Poincar\'{e} duality isomorphism, through a cell, dual-cell argument and by proving that these invariants are preserved under barycentric subdivision.\end{proof}

\section{The Spectral Sequence and Boundary Map} \label{sec: The Spectral Sequence and Boundary Map}

\begin{theorem} \label{thm: spectral sequence} Let $\mf{T}$ be a tiling of $\R^2$ with eFLC and trivial cell isotropy. Then there exists a spectral sequence (of homological type) converging to $H_\bullet(\mf{T}^\text{rot}) \cong \check{H}^{d-\bullet}(\Omega^\text{rot}_\mf{T})$ whose $E^2$ page is given by:
\begin{center}
\begin{tikzpicture}
  \matrix (m) [matrix of math nodes,
    		   nodes in empty cells,
    		   nodes={minimum width=2ex, minimum height=2ex,outer sep=-2pt},column sep=1ex,row sep=1ex,column 1/.style={nodes={text width=2ex,align=center}}]
    {
                &      &     &     & \\
          1     &  H_0(\mf{T}^0) &  \check{H}^1(\Omega^0)  & \check{H}^0(\Omega^0) & \\
          0     &  \check{H}^2(\Omega^0)  & \check{H}^1(\Omega^0) &  \check{H}^0(\Omega^0)  & \\
    \quad\strut &   0  &  1  &  2  & \strut \\};
  \draw[thick] (m-1-1.east) -- (m-4-1.east) ;
\draw[thick] (m-4-1.north) -- (m-4-5.north) ;
\end{tikzpicture}
\end{center}
\end{theorem}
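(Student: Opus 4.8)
The plan is to filter the PE chain complex $C_\bullet(\mf{T}^\text{rot})$ by the dimension of the base cell under the fibration $q \co E \to \R^2$, and to show that the resulting spectral sequence has only two non-trivial rows, which may be identified with the ePE chain complex of $\mf{T}$ and its modification $C_\bullet^\dagger(\mf{T}^0)$.

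First I would set $F_p C_\bullet(\mf{T}^\text{rot})$ to be the subgroup of PE chains supported on cells of $\mc{E}$ whose image under $q$ lies in the $p$-skeleton $\mc{T}^p$. Since the boundary of a cell of $\mc{E}$ projecting to a $k$-cell of $\mc{T}$ is supported on cells projecting to cells of dimension at most $k$, this is a filtration by subcomplexes, bounded by $0 \le p \le 2$, so the associated homological spectral sequence converges to $H_\bullet(\mf{T}^\text{rot})$. Each $k$-cell $c$ of $\mc{T}$ has $q^{-1}(c) \cong c \times S^1$, decomposed by $\mc{E}$ into horizontal cells of dimension $k$ (the $q=0$ cells) and fibre-direction cells of dimension $k+1$ (the $q=1$ cells); hence $E^0_{p,q}=0$ unless $q \in \{0,1\}$, and the spectral sequence occupies exactly the two rows of the displayed page.

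Next I would compute the $E^1$ page. The differential $d^0$ is the part of $\partial$ preserving the base cell, that is, the boundary within each fibre $S^1$; a direct computation (as sketched for the solid tori $q^{-1}(t)$) shows that over every base cell the fibre homology is $H_0(S^1) \cong H_1(S^1) \cong \Z$. For the top row I would identify $E^1_{p,1}$ with $C_p(\mf{T}^0)$ by sending a base $p$-cell to the fundamental class of its fibre, the consistently oriented rotation loop; as this class assigns the value $1$ to every fibre-edge it remains primitive even over symmetric vertices. For the bottom row I would instead use the ``sum over the fibre'' map, sending a fibre $0$-homology class to the total of its values over all cells lying in the fibre. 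The crucial point is the behaviour at exceptional cells: by the strengthened trivial cell isotropy condition the only cells admitting a non-trivial rotational symmetry are the vertices, and over an $n$-fold symmetric vertex $v$ the non-trivial rotation identifies the fibre cells in free $C_n$-orbits, so the generator of the fibre $H_0$ is supported on a full orbit and sums to $n$. Hence the image of this map is exactly $C_\bullet^\dagger(\mf{T}^0)$, the $\dagger$ divisibility condition in degree zero arising precisely from these orbits.

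Finally, since summing over the fibre commutes with the base boundary, the induced $d^1$ on each row is the cellular boundary of $\mf{T}$; thus $E^2_{p,1} = H_p(\mf{T}^0)$ and $E^2_{p,0} = H_p^\dagger(\mf{T}^0)$. Applying the duality isomorphisms of Theorem~\ref{thm: ePE}, part~(2) rewrites the top-row entries $H_1(\mf{T}^0)$ and $H_2(\mf{T}^0)$ as $\check{H}^1(\Omega^0)$ and $\check{H}^0(\Omega^0)$ while leaving $H_0(\mf{T}^0)$ untouched, and part~(3) rewrites the bottom row as $H_p^\dagger(\mf{T}^0) \cong \check{H}^{2-p}(\Omega^0)$; together these give the displayed $E^2$ page. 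I expect the main obstacle to be the careful bookkeeping of the $E^1$ identifications at the exceptional fibres, and in particular verifying that ``sum over the fibre'' descends to the cokernel defining $E^1_{\bullet,0}$, is a chain map, and produces exactly the factor of $n$ that distinguishes $C_\bullet^\dagger(\mf{T}^0)$ from $C_\bullet(\mf{T}^0)$ and thereby accounts for the asymmetry between the two rows.
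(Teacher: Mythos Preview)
Your proposal is correct and follows essentially the same route as the paper: the same filtration of $C_\bullet(\mf{T}^\text{rot})$ by $q^{-1}(\mc{T}^p)$, the same identification of the $q=1$ row with $C_\bullet(\mf{T}^0)$ via the fibre fundamental class and of the $q=0$ row with $C_\bullet^\dagger(\mf{T}^0)$ via the sum-over-fibre map (the paper's $F_1$ and $F_0$), and the same appeal to Theorem~\ref{thm: ePE} to rewrite the $E^2$ page. The paper handles your anticipated ``main obstacle'' exactly as you outline, noting that PE forces the $0$-chain over an $n$-fold symmetric vertex to be $E_\mf{T}(v,v;r)$-invariant and hence to have total a multiple of $n$.
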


\begin{proof} In analogy to the standard construction of the Serre spectral sequence for a fibration with CW base space, consider the filtration of the CW decomposition $\mc{E}$ by the subcomplexes supported on each $\mc{X}^k$ (recall that $\mc{X}^k$ is the preimage under the fibration $q \colon E \rightarrow \R^d$ of the $k$-skeleton $\mc{T}^k$). This induces a filtration of subcomplexes $C_\bullet(\mf{X}^k)$ of the PE chain complex $C_\bullet(\mf{T}^\text{rot})$ by restricting to those PE chains supported on each $\mc{X}^k$. Denote the relative chain complexes of successive levels of this filtration by $C_\bullet(\mf{X}^k,\mf{X}^{k-1})$. Associated to such a bounded filtration, there is bounded spectral sequence of homological type with
\[
E_{p,q}^1 = H_{p+q}(\mf{X}^p,\mf{X}^{p-1}) \Rightarrow H_{p+q}(\mf{X}^2) = H_{p+q}(\mf{T}^\text{rot}).
\]
We claim that there exist canonical chain isomorphisms
\begin{equation*}
E^1_{\bullet,0} = H_\bullet(\mf{X}^\bullet,\mf{X}^{\bullet-1}) \xrightarrow{\cong} C_\bullet^\dagger(\mf{T}^0); \
E^1_{\bullet,1} = H_{\bullet+1}(\mf{X}^\bullet,\mf{X}^{\bullet-1}) \xrightarrow{\cong} C_\bullet(\mf{T}^0),
\end{equation*}
with all other rows of the $E^1$ page trivial. The result then follows from the ePE Poincar\'{e} duality isomorphisms of Theorem \ref{thm: ePE}.

From our assumption of trivial isotropy on the cells of $\mf{T}$, we may consistently assign orientations to its cells with respect to the system of patch-preserving rigid motions of $E_\mf{T}(-,-;-)$. This naturally induces orientations on the cells of $\mc{E}$. Consider the map $F_0$ which sends an (infinite) cellular $k$-chain $\sigma$ of $\mc{E}$ to the $k$-chain of $\mc{T}$ defined on an open $k$-cell $c \in \mc{T}$ by
\[
F_0(\sigma)(c) \coloneqq \sum_{c' \in \mc{E}^k \text{; } c' \subset q^{-1}(c)} \sigma(c').
\]
It is easy to see that $F_0$ commutes with the boundary map. Moreover, it sends PE chains of $\mc{E}$ to ePE chains of $\mc{T}$. Indeed, let $\sigma$ be a PE $k$-chain of $\mc{E}$. For sufficiently large $r$, each $\phi \in E_\mf{T}(c_1,c_2;r)$ transports the chain at $q^{-1}(c_1)$ to that at $q^{-1}(c_2)$. Hence, the oriented sum of coefficients of $\sigma$ at $q^{-1}(c_1)$ and $q^{-1}(c_2)$ agree with respect to the map $\phi$, so $F_0(\sigma)$ is ePE for the same value of $r$.

It follows directly from the definition of $F_0$ that it does not depend on the value of a $k$-chain at cells of $\mc{X}^{k-1}$. We claim that it induces a chain isomorphism
\[
\tilde{F}_0 \colon H_\bullet(\mf{X}^\bullet,\mf{X}^{\bullet-1}) \xrightarrow{\cong} C^\dagger_\bullet(\mf{T}^0).
\]
Clearly outside of degrees $0,1,2$, both chain complexes are trivial. Let us firstly show that $\tilde{F}_0$ induces an isomorphism in degree two. For each tile $t$ of $\mf{T}$, there are precisely two $2$-cells $c_+^2(t)$ and $c_-^2(t)$ corresponding to $t$ in the relative complex $(\mc{X}^2,\mc{X}^1)$: the set of $\phi \in E$ for which the origin belongs to the interior of $\phi^{-1}(t)$ and $|\tau_{\phi^{-1}(t)}| = 0$ (that is, $\phi^{-1}(t)$ is oriented exactly as its associated prototile is) and, respectively, those $\phi$ for which the origin is interior to $\phi^{-1}(t)$ and $|\tau_{\phi^{-1}(t)}| = \pi$. Modulo PE boundaries, each element of $H_2(\mf{X}^2,\mf{X}^1)$ is uniquely represented by a chain concentrated on the cells of the form $c_+^2(t)$ and $F_0$ maps such elements isomorphically to $C_2(\mf{T}^0)$.

Showing that $\tilde{F}_0$ is an isomorphism in degree one is analogous: as before, by trivial cell-isotropy, we may pick a distinguished $1$-cell of $(\mc{X}^1,\mc{X}^0)$ for each $1$-cell of $\mf{T}$, consistently with respect to the maps of $E_\mf{T}(e_1,e_2;r)$; the argument proceeds as in degree two. Finally, we must show that $\tilde{F}_0$ induces an isomorphism in degree zero. Suppose that $\sigma \in C_0(\mf{X}^0)$. By pattern-equivariance, there exists some $r$ for which, for all $\phi \in E_\mf{T}(c_1,c_2;r)$, we have that $\phi$ maps the $0$-chain at $q^{-1}(c_1)$ to that at $q^{-1}(c_2)$. In particular, the chain $\sigma$ is preserved at any fibre $q^{-1}(v)$ by the maps of $E_\mf{T}(v,v;r)$. It follows that $F_0(\sigma)$ assigns a multiple of $\# E_\mf{T}(v,v;r)$ to $v$; that is, a multiple of the order of rotational symmetry of the $r$-patch at $v$. Hence, $F_0(\sigma) \in C^\dagger_0(\mf{T}^0)$. Conversely, by choosing distinguished $0$-cells in $\mc{X}^0$, given any $\sigma \in C^\dagger_0(\mf{T}^0)$ it is easy to see that we may lift it (uniquely, up to boundaries in the fibres) to a PE chain of $\mc{X}^0$ which maps to $\sigma$.

To show that $E^1_{\bullet,1} \cong C_\bullet(\mf{T}^0)$, note that each element of $E^1_{k,1} = H_{k+1}(\mf{X}^k,\mf{X}^{k-1})$ is defined by a chain of the form
\[
\sigma = \sum_{c \in \mc{T}^k} \sigma(c) \cdot \nu_c
\]
where $\sigma(c) \in \Z$ and $\nu_c$ is the sum of consistently oriented $(k+1)$-cells of $q^{-1}(c)$. To say that such a chain is PE is simply to say that the chain
\[
F_1(\sigma) \coloneqq \sum_{c \in \mc{T}^k} \sigma(c) \cdot c
\]
is ePE. It is easy to see that $\partial F_1(\sigma) = F_1(\partial \sigma)$, so this establishes the desired chain isomorphism. It is perhaps useful to point out that, in contrast to the isomorphism $H_0(\mf{X}^0) \cong C_0^\dagger(\mf{T}^0)$, there is no divisibility condition on the coefficients $\sigma(v)$ for vertices $v$ of rotational symmetry; the cycle $\nu_v$ is already invariant under rotation.

The above determines the first and second rows of the $E^1$ page of the spectral sequence. All cells of $\mc{X}^k$ not contained in $\mc{X}^{k-1}$ are either $k$ or $(k+1)$-dimensional, so the remaining entries $E^1_{p,q}$ for $q \neq 0,1$ are trivial. Applying homology and Theorem \ref{thm: ePE} completes the proof. \end{proof}

The $E^2$ page of the spectral sequence constructed above is concentrated in degrees $(p,q) \in \{0,1,2\} \times \{0,1\}$. As such, there is only one potentially non-trivial boundary map $\partial \colon E^2_{2,0} \rightarrow E^2_{0,1}$ required to determine the $E^\infty$ page. The $E^2_{2,0}$ entry is given by $\check{H}^0(\Omega^0)$, which is isomorphic to $\Z$ since $\Omega^0$ is connected, so we seek to describe $[\omega] = \partial(\Gamma)$ for a generator $\Gamma \in E^2_{2,0}$. It turns out that we may effectively describe such a representative $\omega$ in terms of the local combinatorics of the star-patches of $\mf{T}$.

Recall that each tile $t$ has associated to it a rotation $\tau_t$ relating it to its representative prototile, and each star of edge $\text{St}(e)$ determines an orientation for $e$. For each equivalence class of such star, pick a representative $\text{St}(e)$ and let $\rho_e \in \R$ be such that $\rho_e \equiv |\tau_{e(\text{l})}| - |\tau_{e(\text{r})}| \mod 2\pi$, where $e(\text{l})$ and $e(\text{r})$ are the tiles to the left and the right of $e$, respectively. Set $\rho_{e'} = \rho_{e}$ for each $e'$ whose star is rigid equivalent to that of $e$; clearly $\rho_{e'} \equiv |\tau_{e'(\text{l})}| - |\tau_{e'(\text{r})}| \mod 2\pi$ too. Repeat this process for each other class of edge.

Let $v$ be a vertex of $\mf{T}$ and list the edges $e_1, e_2, \ldots, e_k$ incident with $v$. Consider the sum $\sum_{i=1}^k \epsilon_i \rho_{e_i}$, where $\epsilon_i = 1$ if $e_i$ is oriented outwards from the vertex $v$, and $\epsilon_i = -1$ if it is oriented inwards. Passing to the quotient $S^1 = \R/2\pi \Z$, each entry $|\tau_t|$ appears twice with opposite signs in the sum, so $\sum_{i=1}^k \epsilon_i \rho_{e_i} \in 2\pi \Z$. Let $\omega(v) = \sum_{i=1}^k \rho_{e_i} / 2\pi \in \Z$; we may think of $\omega(v)$ as a winding number associated to the sequence of rotations $\rho_e$ relating the rotations $\tau_t$ of consecutive tiles incident with $v$. Repeating this procedure for each vertex of $\mf{T}$ defines an ePE $0$-chain $\omega \in C_0(\mf{T}^0)$.

\begin{theorem} \label{thm: boundary} Let $\partial \colon E_{2,0}^2 \rightarrow E_{0,1}^2$ be the boundary map of the $E^2$ page of the spectral sequence in Theorem \ref{thm: spectral sequence}. Then, with respect to the isomorphism $E_{0,1}^2 \cong H_0(\mf{T}^0)$, we have that $\partial(\Gamma) = [\omega]$ for a generator $\Gamma$ of $E_{2,0}^2 \cong \Z$, where $\omega \in C_0(\mf{T}^0)$ is the ePE $0$-chain constructed above. \end{theorem}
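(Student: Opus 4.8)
The plan is to evaluate the only surviving differential $d_2 \colon E^2_{2,0} \to E^2_{0,1}$ by the standard zig-zag for the spectral sequence of a filtered complex, pushing every step through the explicit chain isomorphisms $F_0$ and $F_1$ built in the proof of Theorem~\ref{thm: spectral sequence}. The generator $\Gamma$ of $E^2_{2,0} \cong H_2(\mf{T}^0) \cong \Z$ is the class of the fundamental ePE $2$-cycle $\sum_t t$, assigning $+1$ to every consistently oriented tile. Under the inverse of $F_0$ in degree two this is represented by the PE $2$-chain $z = \sum_t c_+^2(t)$ concentrated on the cells where each tile covers the origin at its prototile orientation, which I take as my chain-level representative of $\Gamma$.

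The first computation is $\partial z \in C_1(\mf{X}^1)$. Since $c_+^2(t)$ is the locus where $t$ lies over the origin at a fixed orientation, its boundary runs over the edges $e \subset \partial t$ at the single fibre-angle at which $t$ is at its prototile orientation, namely $\theta = |\tau_t|$. Collecting the two contributions over a fixed edge $e$, coming from the tiles $e(\text{l})$ and $e(\text{r})$, the chain $\partial z$ restricted to $q^{-1}(e)$ is (up to the induced signs) the difference of the $1$-cells sitting at fibre-angles $|\tau_{e(\text{l})}|$ and $|\tau_{e(\text{r})}|$. Applying $F_0$ collapses each fibre and returns $\partial(\sum_t t) = 0$, confirming that $[\partial z] = 0$ in $E^1_{1,0} \cong C_1(\mf{T}^0)$, which is exactly the condition for $\Gamma$ to survive to $E^2$.

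The heart of the argument is an explicit, pattern-equivariant choice of correcting $2$-chain. Because $\partial z$ is a relative boundary, there is some $w \in C_2(\mf{X}^1)$ with $u \coloneqq \partial(z - w) \in C_1(\mf{X}^0)$, and then $d_2(\Gamma) = [u]$. I would take $w = \sum_e w_e$, where over each edge $e$ the piece $w_e$ sweeps the fibre arc in $q^{-1}(e)$ from the $1$-cell at angle $|\tau_{e(\text{r})}|$ to the one at angle $|\tau_{e(\text{l})}|$ through a net signed angle of exactly $\rho_e$. Using the representative $\rho_e$ attached to the rigid-equivalence class of $\text{St}(e)$ guarantees that $w$ is genuinely PE, and by construction the ``horizontal'' part of $\partial w_e$ over each edge cancels $\partial z$ there, so that $u$ is supported purely on the vertex fibres $q^{-1}(v) \cong S^1$.

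Finally I would read off $u$ fibrewise. The residual ``vertical'' boundary of $w_e$ over an endpoint $v$ is an arc in $q^{-1}(v)$ of net signed angle $\pm\rho_e$ joining the $0$-cells at the prototile-angles of the two tiles adjacent to $e$, the incidence sign being precisely the $\epsilon_i$ of the construction. Summing over the cyclically listed incident edges $e_1, \ldots, e_k$, consecutive arcs meet head-to-tail at the $0$-cells $\theta = |\tau_t|$ of the successive incident tiles --- this is the telescoping that makes each $|\tau_t|$ cancel in pairs --- so that going once around $v$ yields a fibre cycle of net winding $\sum_{i=1}^k \epsilon_i \rho_{e_i}/2\pi = \omega(v)$; that is, $u$ restricts to $\omega(v)\cdot\nu_v$ on $q^{-1}(v)$. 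Then $F_1(u) = \sum_v \omega(v)\, v = \omega$, giving $\partial(\Gamma) = [\omega] \in H_0(\mf{T}^0)$ as claimed. I expect the principal obstacle to be the orientation bookkeeping: reconciling the induced orientations of the cells $c_\pm^2(t)$, the fibre-sweep orientations of the $w_e$, and the edge-to-vertex incidence signs, so that the arcs concatenate into a single loop of winding number $\omega(v)$ rather than a chain off by sign on each fibre. Any residual global sign is harmless, being absorbed into the choice of generator $\Gamma$.
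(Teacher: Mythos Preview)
Your argument is correct and follows essentially the same route as the paper: you choose the same representative $z=\sum_t c_+^2(t)$ for $\Gamma$, build the same correcting $2$-chain over each edge by sweeping an arc of net angle $\rho_e$ in the fibre direction (your $w$ is the paper's $-\tau$), and then read off the result as the winding number $\omega(v)$ on each vertex fibre via $F_1$. The only differences are cosmetic---you are somewhat more explicit about invoking $F_0$, $F_1$ and about the telescoping at each vertex---and your anticipated orientation bookkeeping is indeed the main thing to track, exactly as in the paper's proof.
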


\begin{proof} To determine the image of $\partial$, it suffices to find a representative $\sigma \in C_2(\mf{X}^2)$ of the generator of $E^2_{2,0}$ and some $\tau \in C_2(\mf{X}^1)$ for which $\partial(\sigma + \tau) \in C_1(\mf{X}^0)$; then $\partial[\sigma]$ is precisely the homology class of $\partial(\sigma + \tau)$ in $E^2_{0,1} \cong H_0(\mf{T}^0)$.

Recall that the proof of Theorem \ref{thm: spectral sequence} relied on the chain isomorphisms $E^1_{\bullet,0} \cong C_\bullet^\dagger(\mf{T}^0)$ and $E^1_{\bullet,1} \cong C_\bullet(\mf{T}^0)$. A generator of $H_2^\dagger(\mf{T}^0)$ is represented by a fundamental class $\Gamma = \sum_{t \in \mc{T}^2} t$ of consistently oriented tiles in $\mf{T}$. Under the chain isomorphism $H_\bullet(\mf{X}^\bullet,\mf{X}^{\bullet-1}) \cong C_\bullet^\dagger(\mf{T}^0)$, this cycle is represented by the PE chain $\sigma \in C_2(\mf{T}^\text{rot})$ which assigns value $1$ to each cell $c_+^2(t)$ in $\mc{E}$ and zero to the others (recall the definition of the cell $c_+^2(t)$ from the proof of Theorem \ref{thm: spectral sequence}). Its boundary $\partial(\sigma)$ is supported on the $1$-cells $c^1(e,t)$ of $\mc{E}$ given by the set of $\phi \in E$ for which $\phi^{-1}(e)$ contains the origin and $|\tau_{\phi^{-1}(t)}| = 0$, where $e$ is a $1$-cell of $\mf{T}$ and $t$ is a tile incident with $e$. The two such edges at any cylinder $q^{-1}(e)$ occur with opposite orientations in $\partial(\sigma)$: positive for the edge $c^1(e,e(\text{r}))$ for the tile $e(\text{r})$ to the right of $e$ and negative for the edge $c^1(e,e(\text{l}))$ for the tile $e(\text{l})$ to the left of $e$ (without loss of generality, by choosing the appropriate orientation for the fundamental class $\Gamma$). Starting at the edge $c^1(e,e(\text{r}))$, rotate $\rho_e$ radians in the second coordinate about the cylinder $e \rtimes S^1$. This traces out a sum of rectangular $2$-cells in $q^{-1}(e)$, the lower edge of the first being $c^1(e,e(\text{r}))$ and the upper edge of the final one $c^1(e,e(\text{l}))$. Repeating this process for every other edge of the tiling defines a PE $2$-chain $\tau \in C_2(\mf{X}^1)$ which, by construction, is such that $\partial(\sigma + \tau) \in C_1(\mf{X}^0)$.

To determine $\partial(\sigma + \tau)$ note that, since $\partial(\sigma + \tau)$ is a cycle supported on the disjoint union $\mc{X}^0$ of circular fibres $q^{-1}(v)$, it suffices to calculate the sum of rotations at each fibre. Since $\partial(\sigma)$ was confined to cells not supported on $\mc{X}^0$, we need only calculate the sum for $\partial(\tau)$. At the fibre of a vertex $v$, the boundary $\partial(\tau)$ winds $\rho_e$ radians for each outwards oriented edge $e$, and $-\rho_e$ for each inwards pointing edge. This sum is precisely that used to define the chain $\omega$. \end{proof}

We recover as a corollary to the above theorems the following result, stated in \cite[Theorem 8]{BDHS10} for eFLC recognisable substitution tilings:

\begin{corollary} Let $\mf{T}$ be a tiling of $\R^2$ with eFLC. Then over real coefficients
\[
\check{H}^\bullet(\Omega_\mf{T}^\text{rot};\R) \cong \check{H}^\bullet(\Omega_\mf{T}^0 \times S^1;\R) \cong \check{H}^\bullet(\Omega_\mf{T}^0;\R) \oplus \check{H}^{\bullet-1}(\Omega_\mf{T}^0;\R).
\]
\end{corollary}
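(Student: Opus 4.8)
The plan is to pass to the spectral sequence of Theorem \ref{thm: spectral sequence} over the field $\R$ and show that it degenerates at the $E^2$ page, after which the claimed direct-sum decomposition---and then the Künneth comparison with $\Omega^0 \times S^1$---follows formally. First I would record what the $E^2$ page becomes over $\R$. By part (1) of Theorem \ref{thm: ePE}, over a divisible coefficient ring the ePE homology and cohomology are Poincaré dual, so $E^2_{0,1} = H_0(\mf{T}^0;\R) \cong H^2(\mf{T}^0;\R) \cong \check{H}^2(\Omega^0;\R)$. Consequently both nonzero rows $q=0,1$ of the $E^2$ page are, column by column, copies of $\check{H}^{2-p}(\Omega^0;\R)$.

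The crux is to show the unique potentially nonzero differential $d^2 \colon E^2_{2,0} \to E^2_{0,1}$, namely the map $\check{H}^0(\Omega^0;\R) \to H_0(\mf{T}^0;\R)$, vanishes over $\R$. By Theorem \ref{thm: boundary} it sends a generator $\Gamma$ to the class $[\omega]$ of the winding-number $0$-chain $\omega$, with $\omega(v) = \sum_i \epsilon_i \rho_{e_i}/2\pi$. The point is that, although $\omega$ is integer-valued, the real $1$-chain $\beta \in C_1(\mf{T}^0;\R)$ defined by $\beta(e) = \rho_e/2\pi$ is ePE (since $\rho_e$ depends only on the rigid-equivalence class of $\text{St}(e)$) and satisfies $\partial\beta = \pm\omega$, directly from the formula defining $\omega$ as an oriented sum of the $\rho_{e_i}$ about $v$. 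Hence $[\omega] = 0$ in $H_0(\mf{T}^0;\R)$ and $d^2 = 0$. This is exactly the step that fails over $\Z$: the coefficients $\rho_e/2\pi$ are genuine (typically irrational) real numbers, so $\beta$ does not refine to an integral chain, which is why the boundary map can carry torsion integrally. I expect this vanishing, together with correctly matching the orientation conventions of Theorem \ref{thm: boundary}, to be the only real work in the argument.

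With $d^2 = 0$ the sequence collapses, so $E^\infty = E^2$. Since we work over a field there are no extension problems, and the convergence statement of Theorem \ref{thm: spectral sequence} identifies $\check{H}^k(\Omega^\text{rot};\R)$ with the direct sum of the $E^2$ entries on the relevant total-degree diagonal. The $q=1$ row contributes the $\check{H}^k(\Omega^0;\R)$ summand, while the $q=0$ row, shifted by one in total degree, contributes $\check{H}^{k-1}(\Omega^0;\R)$; carrying out this bookkeeping yields $\check{H}^k(\Omega^\text{rot};\R) \cong \check{H}^k(\Omega^0;\R) \oplus \check{H}^{k-1}(\Omega^0;\R)$. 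Finally, the Künneth theorem over the field $\R$ gives $\check{H}^\bullet(\Omega^0 \times S^1;\R) \cong \check{H}^\bullet(\Omega^0;\R) \oplus \check{H}^{\bullet-1}(\Omega^0;\R)$ (using that $\check{H}^\bullet(S^1;\R)$ is $\R$ in each of degrees $0$ and $1$, and that there is no Tor term over a field), which matches and completes the chain of isomorphisms. The remaining assembly beyond the vanishing of $d^2$ is entirely formal.
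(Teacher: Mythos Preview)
Your proof is correct and follows essentially the same route as the paper's: both use the spectral sequence of Theorem~\ref{thm: spectral sequence} over $\R$, invoke the divisible-coefficient Poincar\'e duality of Theorem~\ref{thm: ePE} to identify the two rows, and kill the lone differential by exhibiting the ePE $1$-chain with coefficient (a scalar multiple of) $\rho_e$ as an explicit primitive of $\omega$. The only point you omit is the one-line reduction to trivial cell isotropy via barycentric subdivision, and you make the K\"unneth step for $\Omega^0 \times S^1$ explicit where the paper leaves it implicit.
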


\begin{proof} We may assume that $\mf{T}$ has trivial cell isotropy (by taking a barycentric subdivision, which does not change the topology of the spaces $\Omega^0$ or $\Omega^\text{rot}$). Consider the real coefficient counterparts of theorems \ref{thm: spectral sequence} and \ref{thm: boundary}. The chain $\omega$ is directly constructed as a boundary in $C_\bullet(\mf{T}^0;\R)$ over real coefficients: it is the boundary of the ePE $1$-chain whose coefficient at an edge $e$ is given by $-\rho_e \in \R$. So the boundary map at the $E^2$ page is trivial and $E^2 \cong E^\infty$. Over real coefficients, $E_{p,0}^\infty \cong \check{H}^{2-p}(\Omega^0;\R)$ and, since $H_0(\mf{T}^0;\R) \cong \check{H}^2(\Omega^0;\R)$ by Theorem \ref{thm: ePE}, $E_{p,1}^\infty \cong \check{H}^{2-p}(\Omega^0;\R)$ also; all other rows are trivial. There are no extension problems over field coefficients, and so the result follows. \end{proof}

\section{Cohomology of the Penrose Tilings} \label{sec: Cohomology of the Penrose Tilings}
\subsection{Invariants of Hierarchical Tilings}
Many interesting aperiodic tilings have the special property of being equipped with a hierarchical structure, typically described in terms of a `substitution rule'. For tilings such as this, Anderson and Putnam \cite{AndPut98} showed how one may construct a CW approximant $\Gamma$ with self-map $f$ for which the translational hull $\Omega^1 \coloneqq \overline{\mf{T}+\R^d}$ is homeomorphic to the inverse limit of the diagram $\Gamma \xleftarrow{f} \Gamma \xleftarrow{f} \cdots$. This allows the \v{C}ech cohomology of $\Omega^1$ to be computed. Barge, Diamond, Hunton and Sadun showed how one may construct similar such approximants for $\Omega^1$ (which often leads to simpler computations than the method of \cite{AndPut98}), as well as for the rotational tiling spaces $\Omega^0$ and $\Omega^\text{rot}$. In \cite{Wal16}, we showed how one may compute $H_\bullet(\mf{T}^0)$ of a hierarchical tiling with eFLC, the method being close in spirit to the computations of \cite{BDHS10} for invariants of $\Omega^0$. For $\mf{T}$ a tiling of $\R^2$, the method is easily modified to compute the \v{C}ech cohomology $\check{H}^\bullet(\Omega^0) \cong H_{2-\bullet}^\dagger(\mf{T}^0)$. The approach is phrased directly in terms of the star-patches of the tiling, and so dovetails conveniently with the description of the $E^\infty$ page of the spectral sequence provided by Theorem \ref{thm: boundary}.

\subsection{The Penrose Tilings}
Let $\mf{T}$ be a tiling of Penrose's famous `kite' and `dart' tiles which meet along edges according to their matching rules \cite{Pen84}. These tilings have a hierarchical structure, based upon self-similarity with inflation constant the golden ratio. Applying the computations of \cite{Wal16} to $\mf{T}$, one calculates:
\begin{align*}
H_k(\mf{T}^0) = \begin{cases}
	\Z^2 \oplus \Z / 5 & \text{for } k=0;\\
	\Z                 & \text{for } k=1;\\
	\Z                 & \text{for } k=2;\\
	0                  & \text{otherwise.}
\end{cases}
& & &
H_k^\dagger(\mf{T}^0) = \begin{cases}
	\Z^2 & \text{for } k=0;\\
	\Z   & \text{for } k=1;\\
	\Z   & \text{for } k=2;\\
	0    & \text{otherwise.}
\end{cases}
\end{align*}
These ePE homology groups determine the $E^2$ page of the spectral sequence of Theorem \ref{thm: spectral sequence}; see Figure \ref{fig: Penrose sp seq}

\begin{figure}
	\begin{center} 
		\begin{tikzpicture}
  \matrix (m) [matrix of math nodes,
    		   nodes in empty cells,
    		   nodes={minimum width=2ex, minimum height=2ex,outer sep=-2pt}, column sep=1ex,row sep=1ex, column 1/.style={nodes={text width=2ex,align=center}}]
    {
                &      &     &     & \\
          1     &  \Z^2 \oplus \Z / 5 &  \Z  & \Z & \\
          0     &  \Z^2  & \Z &  \Z  & \\
    \quad\strut &   0  &  1  &  2  & \strut \\};
\draw[thick] (m-1-1.east) -- (m-4-1.east) ;
\draw[thick] (m-4-1.north) -- (m-4-5.north) ;
\draw[-stealth] (m-3-4.north west) -- (m-2-2.south east);
		\end{tikzpicture}
\hspace{2cm}
		\begin{tikzpicture}
  \matrix (m) [matrix of math nodes,
    		   nodes in empty cells,
    		   nodes={minimum width=2ex, minimum height=2ex,outer sep=-2pt}, column sep=1ex,row sep=1ex, column 1/.style={nodes={text width=2ex,align=center}}]
    {
                &      &     &     & \\
             1  & \Z^2 &  \Z & \Z  & \\
             0  & \Z^2   & \Z  &  \Z & \\
    \quad\strut & 0    &  1  &  2  & \strut \\};
\draw[thick] (m-1-1.east) -- (m-4-1.east) ;
\draw[thick] (m-4-1.north) -- (m-4-5.north) ;
		\end{tikzpicture}
	\end{center}
\caption{The $E^2$ (left) and $E^\infty$ page (right) of the spectral sequence for the Penrose tilings.}
\label{fig: Penrose sp seq}
\end{figure}

\begin{figure}
    \centering
    \captionsetup{width=0.45\linewidth}
    \subfloat[The tiles of the Penrose tilings are given rotational orientations in the plane (left). These define the values $\rho_e$ (mod $2\pi$) of the $1$-stars (middle). The chain $\omega$ is defined by the resulting winding numbers at each $0$-star (right).]{%
        \includegraphics[width=0.5\textwidth]{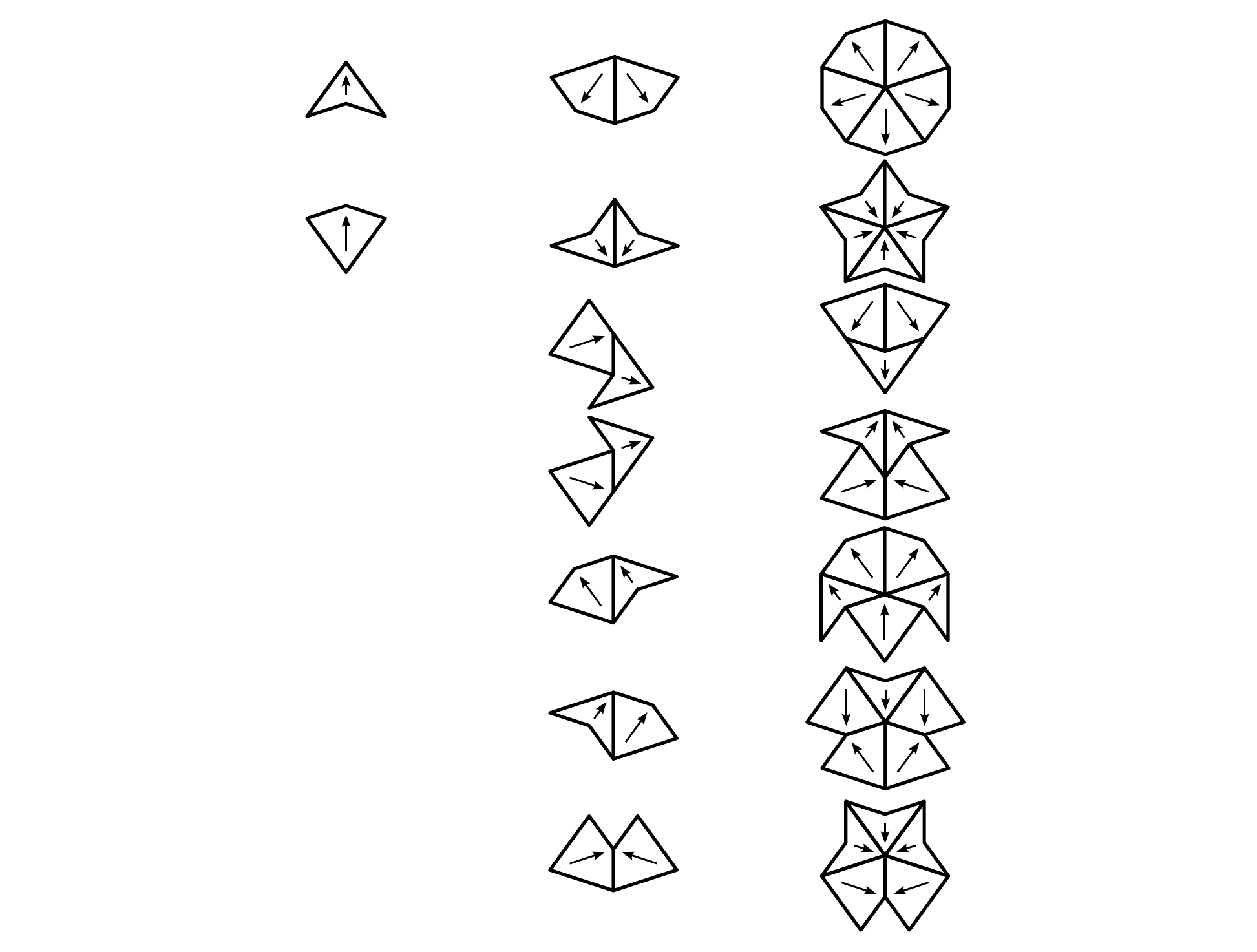}%
        \label{fig: star-patches}%
        }%
    \hfill%
    \subfloat[Illustration of homotopies $a \simeq p \ast a$ (bottom triangle) and $p^{-1} \ast a \simeq b$ (top triangle).]{%
        \includegraphics[width=0.5\textwidth]{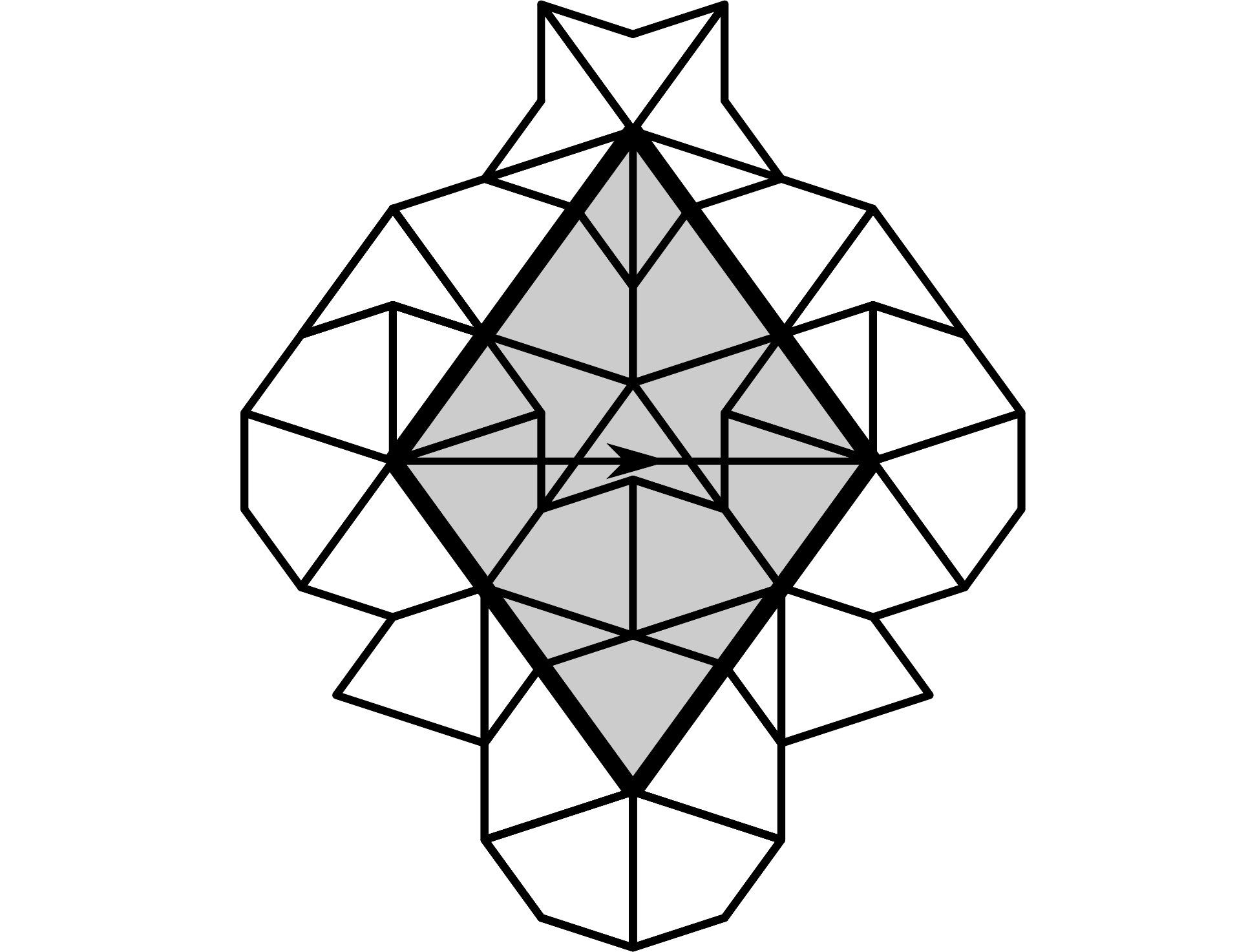}%
        \label{fig: homotopies}%
        }%
       \caption{}
\end{figure}

Theorem \ref{thm: boundary} determines the differential $\partial \colon E_{2,0}^2 \rightarrow E_{0,1}^2$. There are two rigid equivalence classes of tiles in a Penrose tiling: kites and darts. They have official rotational orientations in the plane, as indicated by the arrows of Figure \ref{fig: star-patches}. For the star of an edge $e$, recall that we must set $\rho_e$ so that $\rho_e \equiv |\tau_{e(\text{l})}| - |\tau_{e(\text{r})}| \mod 2\pi$. The value $\rho_e$ is simply the angle of rotation which takes the arrow of the right tile to that of the left. We let $\rho_e$ be the rotation of smallest such magnitude, counted positive for anticlockwise rotations and negative clockwise rotations. Then $\rho_{e_k} = 2\pi n_k /10$, where $n_k=-2$, $+2$, $+1$, $-1$, $0$, $0$ and $-4$, for the seven edge-stars, with respect to their ordering in Figure \ref{fig: star-patches}. To calculate $\omega$, we sum the values $\rho_e/2\pi$ about each vertex, taken with the appropriate sign according to whether $e$ is oriented inwards or outwards from $v$; equivalently, calculate the winding number of the motion that the arrow takes about a vertex, rotating from one tile to the next by the smallest magnitude. Then $\omega(v_k) = +1$, $+1$, $0$, $0$, $0$, $-1$, $0$ for the seven vertex-stars, ordered as in Figure \ref{fig: star-patches}.

The ePE $0$-chain $\omega$ is precisely the representative of a generator of the $5$-torsion of $H_0(\mf{T}^0)$ calculated in \cite{Wal16}. So the $5$-torsion at the $E^2$ page is killed. This leaves the $E^\infty$ page free Abelian (see Figure \ref{fig: Penrose sp seq}) and so there are no extension problems. It follows that $\check{H}^k(\Omega^\text{rot}) \cong \Z,\Z^2,\Z^3,\Z^2$ for $k=0,1,2,3$, and is trivial in other degrees.

\subsection{Exceptional Fibres}

A common explanation for additional torsion in $\check{H}^2(\Omega^\text{rot})$ is that exceptional fibres in the BDHS approximants $\Gamma$ to $\Omega^\text{rot}$ (see \cite{BDHS10}), corresponding to patches of rotational symmetry, lead to non-trivial torsion in $H_1(\Gamma)$. The result above shows that this cannot happen for the Penrose tilings, we shall demonstrate that directly here.

The action of rotation naturally induces an action on $\Gamma$. Let $r \colon \Gamma \rightarrow \Gamma/S^1 =: \Gamma^0$ be the quotient; the space $\Gamma^0$ is an approximant for $\Omega^0$. The $S^1$-action is free on all but finitely many isolated fibres $r^{-1}(x)$, corresponding to patches with non-trivial rotational symmetry. This is the analogue, from $3$-manifold theory, of a `Seifert fibred space' (only an analogue since the approximant $\Gamma$ here is a \emph{branched} manifold). We call the fibres upon which $S^1$ acts freely \emph{generic}; the fibres upon which a cyclic subgroup $C_n \leq S^1$ of order $n$ acts as the identity shall be called \emph{$n$-exceptional}.

Given $x \in \Gamma^0$, there is a corresponding loop $l_x$ about the fibre of $r^{-1}(x)$ (traversing it, say, in an anticlockwise direction). The loops $l_x$ and $l_y$ at generic fibres are always homotopic; indeed, connect $x$ to $y$ by a continuous path and shift $l_x$ to $l_y$ fibre-wise. We may not shift the loop $l_x$ of an $n$-exceptional point $x$ to a generic fibre $y$ in this fashion; instead, we have that $l_y$ is homotopic to $n \cdot l_x$. If we have two $n$-exceptional points $x,y \in \Gamma^0$, we may attempt to create $n$-torsion in $\pi_1(\Gamma)$, and hence $n$-torsion in $H^2(\Gamma)$, by considering the concatenation of loops $l_x \ast l_y^{-1}$ (where we concatenate as standard by connecting the loops to some arbitrary base point). Since they may not be simply shifted onto nearby generic fibres, we may suspect that $l_x$ and $l_y$ are not homotopic, but we have that $n \cdot l_x \simeq l_z \simeq n \cdot l_y$ for a generic point $z$, so $n(l_x \ast l_y^{-1}) = 0$ and $l_x \ast l_y^{-1}$ is $n$-torsion. Unfortunately, this argument relies on $l_x$ and $l_y$ not being homotopic; they are certainly not naively homotopic by shifting the loops fibre-wise. However, they may be homotopic via a more subtle procedure.

Consider the Penrose tiling spaces $\Omega^\text{rot}$ and $\Omega^0$, with corresponding BDHS approximants $\Gamma \coloneqq K_\epsilon^\text{rot}$ and $\Gamma^0 \coloneqq K_\epsilon^0$ for small $\epsilon > 0$ (see the notation of \cite{BDHS10} for the BDHS approximants $K_\epsilon^0$ and $K_\epsilon^\text{rot}$). There are two $5$-exceptional points $x$ and $y$ of $\Gamma^0$, corresponding to the centres of the `sun' and `star' patches, respectively (which are the first and second vertex-stars of Figure \ref{fig: star-patches}). Our calculation of the cohomology $\check{H}^\bullet(\Omega^\text{rot})$ implies that the exceptional loops $a \coloneqq l_x$ and $b: = l_y$ represent the same homology classes. We shall in fact show that they are homotopic loops.

The patch of tiles of Figure \ref{fig: homotopies} is a valid patch from a Penrose tiling, so a path of it staying clear (with respect to sufficiently small $\epsilon$) of its boundary projects to $\Gamma^0$. A further continuous choice of rotational orientation of this patch lifts the path to a path of $\Gamma$. In this manner, the loop $a$ is represented by staying at the sun vertex at the bottom of the patch, and continuously rotating the patch anti-clockwise from its original orientation to $2\pi/5$ by the end of the motion. The loop $b$ is similarly represented, by staying put at the upper star vertex, and rotating the picture continuously by $2\pi/5$. Consider the pair of horizontally aligned sun vertices at the centre of the patch. We may define a loop $p$ in $\Gamma$ by setting $p(t)$ to be the point of $\Gamma$ corresponding to the patch at $p'(t)$ for a continuous path $p'$ travelling horizontally rightwards from the left sun vertex to the right one. We claim that $p$ is null-homotopic in $\Gamma$, and that $b \simeq p^{-1} \ast a$. It then follows that $a = b$ in $\pi_1(\Gamma)$.

Both of these claims follow from a quick examination of Figure \ref{fig: homotopies}. Consider the left (resp.\ right) line segment between the lower sun vertex and the left (resp.\ right) sun vertex of the patch. Let $x_s$ and $y_s$ be the pair of points lying on the left and right line segments, respectively, at vertical displacement $s$ between the lower sun vertex and the two upper ones (normalised so that this vertical displacement is $1$); so $x_0 = y_0$ corresponds to the bottom sun vertex and $x_1$, $y_1$ correspond to the left and right sun vertices, respectively. For $t \in [0,1/2]$, we define $l_s(t)$ to be the point of $\Gamma$ corresponding to the point of the patch $(1-2t) \cdot x_s + 2t \cdot y_s$; that is, $l_s(t)$ travels linearly from $x_s$ at time $t=0$ to $y_s$ at time $t=1/2$. At times in $[1/2,1]$ we define $l_s(t)$ by linearly rotating the patch at $l_s(1/2)$ anticlockwise, so that $l_s(1)$ is eventually rotated $2\pi/5$ relative to $l_s(1/2)$. Note that the patches at $x_s$ and $y_s$ are related by a rotation by $2\pi/5$ at $y_s$, so each $l_s$ defines a loop. Since $l_s(t)$ varies continuously in $s$ and $t$, we have defined a homotopy between $l_0 \simeq a$ and $l_1 = p \ast a$. So $a = p \ast a$ in $\pi_1(\Gamma)$ which implies that $p$ is null-homotopic.

A second homotopy of loops $l'_s$ is defined similarly. This time, we begin with the loop $l_0' = p^{-1} \ast a$, visualised by travelling from right to left between the two central sun vertices, and then rotating at the end of the loop by $2\pi/5$. Analogously to above, we may continuously shift this loop upwards, so that $l'_1 \simeq b$. It follows that $p^{-1} \ast a = b$ in $\pi_1(\Gamma)$. Since $p$ is null-homotopic, it follows that the exceptional loops $a$ and $b$ are homotopic. This agrees with our calculation of $\check{H}^\bullet(\Omega^\text{rot})$, the loop $a \ast b^{-1}$ is null-homotopic and does not induce torsion in $\check{H}^2(\Omega^\text{rot})$.

\section{Tilings of Finite Local Complexity} \label{sec: Tilings of Finite Local Complexity}

A tiling is said to have \emph{translational finite local complexity} (\emph{FLC}) if there are only finitely many patches of diameter at most $r$ up to translation equivalence for each $r>0$. Many interesting examples of aperiodic tilings (such as the Penrose tilings) have FLC, although some only satisfy the weaker condition of having eFLC (such as the Conway--Radin pinwheel tilings). The spectral sequence developed above not only gives a method of computing the \v{C}ech cohomology of an eFLC tiling for which the ePE (co)homology may be computed, it also provides a filtration of its cohomology in terms of these ePE invariants. For a tiling with FLC, we shall provide an alternative approach to computation of its cohomology which produces a different decomposition in terms of invariants of its translational hull.

\subsection{Translational Hulls}
For a tiling $\mf{T}$ of $\R^d$ with FLC, its \emph{translational hull} is the topological space
\[
\Omega^1 = \Omega^1_\mf{T} \coloneqq \overline{\mf{T} + \R^d},
\]
the completion of the translational orbit of $\mf{T}$ with respect to the standard tiling metric. Say that a finite subgroup $\Theta \leq \text{SO}(d)$ \emph{acts on $\mf{T}$ by rotations} if, for every patch $P$ of $\mf{T}$ and rotation $\phi \in \Theta$, we have that $\phi(P)$ is also a patch of $\mf{T}$, up to translation. Note that, in this case, $\Theta$ naturally acts on $\Omega^1$ via $\phi \cdot \mf{T}' \mapsto \phi(\mf{T}')$ for $\phi \in \Theta$ and $\mf{T}' \in \Omega^1$. If, in addition, we have that patches $P$ and $Q$ of $\mf{T}$ agree up to rigid motion if and only if $P$ and $\phi(Q)$ agree up to translation for some $\phi \in \Theta$, then we say that $\mf{T}$ \emph{has rotation group $\Theta$}. One may easily construct tilings which do not have rotation groups, but they tend to be somewhat artificial.

\subsection{Mapping Tori}
Let $X$ be a compact, Hausdorff space and $f \colon X \rightarrow X$ be a homeomorphism. The \emph{mapping torus of $f$} is defined to be the quotient space
\[
X_f \coloneqq \frac{X \times [0,1]}{(x,1) \sim (f(x),0)}.
\]
There is a map $\pi \colon X_f \rightarrow \R/ \Z$ defined by setting $\pi(x,t) = [t]$, making $X_f$ a fibre bundle over $S^1$ with fibres $X$. Cut $S^1$ into two closed semicircles and consider their preimages $U,V$; each is homotopy equivalent to $X$ and their intersection $U \cap V$ is homotopy equivalent to a disjoint union of two copies of $X$. The associated Mayer--Vietoris sequence reads
\[
\cdots \check{H}^k(X_f) \rightarrow \check{H}^n(X) \oplus \check{H}^k(X) \xrightarrow{\Psi} \check{H}^k(X) \oplus \check{H}^k(X) \rightarrow \check{H}^{k+1}(X_f) \rightarrow \cdots,
\]
where $\Psi$ is given by $\Psi(x,y) = (x+y,x+f^*(y))$; see \cite[\S2]{Hun15}. With some further simple algebraic manipulations, we may express the cohomology of $X_f$ in terms of invariants and coinvariants of $f$. We have short-exact sequences
\[
0 \rightarrow \text{coinvar}^{k-1}(f) \rightarrow \check{H}^k(X_f) \rightarrow \text{invar}^k(f) \rightarrow 0
\]
where
\begin{equation*}
	\begin{aligned}
\text{invar}^k(f^*) \coloneqq \ker(\operatorname{id}-f^* \colon \check{H}^k(X) \rightarrow \check{H}^k(X)); \\
\text{coinvar}^k(f^*) \coloneqq \frac{\check{H}^k(X)}{\im(\operatorname{id}-f^* \colon \check{H}^k(X) \rightarrow \check{H}^k(X))}.
	\end{aligned}
\end{equation*}

\subsection{Rotational Tiling Spaces as Mapping Tori}

Let $\mf{T}$ be a tiling of $\R^2$ with FLC and rotation group $\Theta$. Then $\Theta = C_n$ for some $n \in \N$, where $C_n$ is the cyclic subgroup of $\text{SO}(2)$ of rotations by $2\pi k/n$. Rotation by $2\pi/n$ generates $\Theta$, and induces a homeomorphism $f \colon \Omega^1 \rightarrow \Omega^1$. It is not difficult to see that $\Omega^\text{rot}$ is homeomorphic to the mapping torus of $f$; a point of the mapping torus represented by $(\mf{T}',t) \in \Omega^1 \times [0,1]$ is identified with $\phi_{2\pi t/n}(\mf{T}')$, where $\phi_s \in \text{SO}(2)$ is the rotation by $s$ at the origin. Then from our discussion above on the cohomologies of mapping tori, we obtain the following:

\begin{theorem} For a tiling $\mf{T}$ of $\R^2$ with FLC and rotation group $\Theta$ of order $n$, we have short exact sequences
\[
0 \rightarrow \text{coinvar}^{k-1}(f^*) \rightarrow \check{H}^k(\Omega^\text{rot}) \rightarrow \text{invar}^k(f^*) \rightarrow 0
\]
for all $k \in \N_0$, where $f \colon \Omega^1 \rightarrow \Omega^1$ is the homeomorphism induced by the rotation by $2\pi /n$.
\end{theorem}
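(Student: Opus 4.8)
The plan is to read the theorem off from the two ingredients already assembled: the identification of $\Omega^\text{rot}$ with the mapping torus $X_f$, and the general short exact sequence expressing $\check{H}^\bullet(X_f)$ through the invariants and coinvariants of $f^*$. Since $\mf{T}$ has FLC, its translational hull $\Omega^1$ is a compact metric space, hence compact Hausdorff, so the mapping torus theory of the previous subsection applies with $X = \Omega^1$. The entire content of the proof therefore reduces to justifying the homeomorphism $\Omega^\text{rot} \cong X_f$ carefully, and then invoking the algebra.

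To make the homeomorphism precise I would define $g \colon \Omega^1 \times [0,1] \to \Omega^\text{rot}$ by $g(\mf{T}',t) = \phi_{2\pi t/n}(\mf{T}')$, where $\phi_s \in \text{SO}(2)$ is rotation by $s$ about the origin, and check that $g$ respects the gluing of the mapping torus: at $t=1$ we have $g(\mf{T}',1) = \phi_{2\pi/n}(\mf{T}') = f(\mf{T}')$, which coincides with $g(f(\mf{T}'),0)$, so $g$ descends to a map $\bar{g} \colon X_f \to \Omega^\text{rot}$. Continuity of $\bar{g}$ is immediate from continuity of the $\text{SO}(2)$-action on $\Omega^\text{rot}$.

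The crux is bijectivity, and this is precisely where the rotation group hypothesis enters. For surjectivity: every tiling in $\Omega^\text{rot}$ has all patches rigid-equivalent to patches of $\mf{T}$, and because $\mf{T}$ has rotation group $\Theta = C_n$, each such patch agrees up to translation with a $\Theta$-rotate of a patch of $\mf{T}$; consequently any element of $\Omega^\text{rot}$ can be written as $\phi_s(\mf{T}')$ with $\mf{T}' \in \Omega^1$ and $s \in [0, 2\pi/n)$. For injectivity, if $\phi_s(\mf{T}') = \phi_{s'}(\mf{T}'')$ with $s,s' \in [0,2\pi/n)$, then $\mf{T}''$ is a rotate of $\mf{T}'$ by the angle $s-s'$; comparing patches and using that rotations by distinct angles are never related by a translation, the rotation group hypothesis forces $s-s' \in \frac{2\pi}{n}\Z$, hence $s = s'$ and $\mf{T}' = \mf{T}''$ in $\Omega^1$, which is exactly the identification already present in $X_f$. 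As $\bar{g}$ is a continuous bijection from the compact space $X_f$ onto the Hausdorff space $\Omega^\text{rot}$, it is a homeomorphism.

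With the homeomorphism established, transporting $\check{H}^\bullet(X_f) \cong \check{H}^\bullet(\Omega^\text{rot})$ through $\bar{g}^*$ and feeding $X = \Omega^1$ into the mapping-torus short exact sequence yields the claimed sequences for all $k \in \N_0$. The step I expect to be the main obstacle is the homeomorphism itself, specifically confirming that the quotient topology on $X_f$ matches the Euclidean-hull topology on $\Omega^\text{rot}$. The compactness of $X_f$ against the Hausdorffness of $\Omega^\text{rot}$ reduces this to the continuous bijection above, but both surjectivity and injectivity genuinely depend on the rotation group hypothesis, which controls exactly when rigid-equivalent tilings differ only by a $\Theta$-rotation.
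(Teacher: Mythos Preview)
Your proposal is correct and follows the same approach as the paper: identify $\Omega^{\text{rot}}$ with the mapping torus of $f$ via the map $(\mf{T}',t) \mapsto \phi_{2\pi t/n}(\mf{T}')$, then feed this into the Mayer--Vietoris short exact sequence for mapping tori already derived. The paper simply asserts the homeomorphism as ``not difficult to see'', whereas you spell out the continuous bijection and the compact-to-Hausdorff argument, correctly locating the role of the rotation group hypothesis in both surjectivity and injectivity; your phrase ``rotations by distinct angles are never related by a translation'' is slightly loose (a single patch can have rotational self-symmetry), but applied to arbitrarily large patches the rotation group condition forces $s-s' \in \tfrac{2\pi}{n}\Z$ as you claim.
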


\subsection{Invariants and Coinvariants of the Penrose Tilings}

The translational hull $\Omega^1$ of the Penrose tilings has cohomology $\check{H}^k(\Omega^1) \cong \Z,\Z^5,\Z^8$ in degrees $k=0,1,2$, respectively. The Penrose tilings have rotation group of order $10$; let $f$ denote the homeomorphism of $\Omega^1$ induced by rotation by $2\pi/10$. Since each cohomology group of $\Omega^1$ is free Abelian, so is $\text{invar}^k(f^*)$ for each $k \in \N_0$, so there are no extension problems and the cohomology of the Euclidean hull splits as a direct summand $\check{H}^k(\Omega^\text{rot}) \cong \text{coinvar}^{k-1}(f^*) \oplus \text{invar}^k(f^*)$ of the invariants and coinvariants.

These cohomology groups, and the induced action $f^*$ on them, may be computed using the methods of \cite{AndPut98, BDHS10}, or via PE homology as described in \cite{Wal16}. Using the latter approach, we directly compute
\[
\text{invar}^k(f^*) \cong \Z, \Z, \Z^2; \  \text{coinvar}^k(f^*) \cong \Z, \Z, \Z^2
\]
for $k=0,1,2$, respectively. This agrees with our computation via the ePE spectral sequence. For example, in the degree $k=2$ of particular interest where extra torsion is potentially picked up, we compute that $f^*$ acts on $H^1(\Omega^1) \cong \Z^5$ by the matrix
\[
M \coloneqq 
\left(
\begin{smallmatrix}
1 & 0 & 0 & 0 & 0 \\
0 & 0 & 0 & 0 & -1 \\
0 & 1 & 0 & 0 &  1 \\
0 & 0 & 1 & 0 & -1 \\
0 & 0 & 0 & 1 &  1 \\
\end{smallmatrix}
\right)
\]	

Then $M - \operatorname{id}$ sends the first basis vector to zero and is an isomorphism upon restriction to the subspace spanned by the remaining four basis vectors. It follows that $\text{coinvar}^1(f^*) \cong \Z$ and we may already conclude that $\check{H}^2(\Omega^\text{rot}) \cong \Z \oplus \text{invar}^2(f^*)$ is free Abelian. We may similarly compute $\text{invar}^2(f^*)$ directly, or note that the rank of $\text{invar}^2(f^*)$ is equal to the rank of $\check{H}^2(\Omega^0) \cong \Z^2$ (c.f., \cite[Theorem 7]{BDHS10}, \cite[Proposition 3.12]{Wal16}). It follows that $\check{H}^2(\Omega^\text{rot}) \cong \Z^3$.

\bibliographystyle{abbrv}

\bibliography{./biblio}

\end{document}